\def\E{\mathbb{E}}
\newtheorem{Remark}{Remark}[section]
\newtheorem{Assumption}{Assumption}[section]
\newtheorem{theorem}{Theorem}[section]
\newtheorem{lemma}[theorem]{Lemma}
\newtheorem{definition}[theorem]{Definition}
\newtheorem*{thm21}{Theorem 2.1}
\newtheorem*{thm22}{Theorem 2.2}
\newtheorem*{thm23}{Theorem 2.3}
\begin{document}
\title{Long-Run Accuracy of Variational Integrators in the Stochastic Context}

\author{Nawaf Bou-Rabee\thanks{Courant Institute of Mathematical Sciences, New York 
University, 251 Mercer Street, New York, NY 10012-1185 ({\tt nawaf@cims.nyu.edu}).}   \and 
Houman Owhadi\thanks{Applied \& Computational Mathematics and Control \& Dynamical 
Systems, Caltech, Pasadena, CA 91125 (\tt{owhadi@acm.caltech.edu}).}
}

\maketitle

\begin{abstract}

	This paper presents a Lie-Trotter splitting for inertial Langevin equations (Geometric Langevin Algorithm) 
	and analyzes its long-time statistical properties.  The splitting is defined as a composition of a 
	variational integrator with an Ornstein-Uhlenbeck flow.   Assuming the exact solution 
	and the splitting are geometrically ergodic, the paper proves the discrete invariant 
	measure of the splitting approximates the invariant measure of inertial Langevin 
	to within the accuracy of the variational integrator in representing the Hamiltonian.   
	In particular,  if the variational integrator admits no energy error, then the method samples 
	the invariant measure of inertial Langevin without error.  Numerical validation is provided 
	using explicit variational integrators with first, second, and fourth order accuracy.  
	
\end{abstract}

\noindent {\bf Keywords}

	Lie-Trotter splitting, variational integrators, Ornstein-Uhlenbeck, 
	inertial Langevin, Boltzmann-Gibbs measure, geometric ergodicty

\medskip

\noindent {\bf AMS Subject Classification}

	65C30  (65C05, 60J05, 65P10)



\section{Introduction}

\paragraph{Overview}

This paper analyzes equilibrium statistical accuracy of discretizations of 
inertial Langevin equations based on variational integrators.   
Variational integrators are time-integrators adapted to the structure 
of mechanical systems\cite{MaWe2001}.   The theory of variational integrators includes 
discrete analogs of the Lagrangian, Noether's theorem, the Euler-Lagrange equations, 
and the Legendre transform.  Variational integrators can incorporate holonomic constraints 
(via, e.g., Lagrange multipliers) \cite{WeMa1997} and multiple time steps to obtain so-called 
asynchronous variational integrators \cite{LeMaOrWe2003}.

The generalization of variational integrators the paper analyzes are derived from a 
Lie-Trotter splitting of inertial Langevin equations into Hamiltonian and 
Ornstein-Uhlenbeck equations.   The integrator is then defined by selecting a variational 
integrator to approximate the Hamiltonian flow and using the exact Ornstein-Uhlenbeck flow.  
Such a generalization of variational integrators to inertial 
Langevin equations will be called a Geometric Langevin Algorithm (GLA).

This type of splitting of inertial Langevin equations is natural, but seems to have been only 
recently introduced in the literature (for molecular dynamics see \cite{VaCi2006, BuPa2007}, for 
dissipative particle dynamics see \cite{Sh2003, SeFaEsCo2006}, and for inertial particles see 
\cite{PaStZy2008}).   This paper is geared towards applications in molecular dynamics where 
inertial Langevin integrators (including the ones cited above) have been based on 
generalizations of the widely used St\"{o}rmer-Verlet integrator.   The St\"{o}rmer-Verlet 
integrator is attractive for molecular dynamics because it is an explicit, symmetric, second-order 
accurate, variational integrator for Hamilton's equations. In molecular dynamics it was 
popularized by Loup Verlet in 1967.   Other popular generalizations of the St\"{o}rmer-Verlet 
integrator to inertial Langevin equations include Br\"{u}nger-Brooks-Karplus (BBK) 
\cite{BrBrKa1984}, van Gunsteren and Berendsen (vGB) \cite{GuBe1982}, and the Langevin-
Impulse (LI) methods  \cite{SkIz2002}.  The LI method is also based on a splitting of inertial 
Langevin equations, but it is different from the splitting considered here.     To our knowledge 
there are few results in the literature which quantify the long-time statistical accuracy of the
Lie-Trotter splitting considered here.

GLA is not only quasi-symplectic as defined in RL1 and RL2 of 
\cite{MiTr2003}, but also conformally symplectic, i.e., preserves the precise symplectic area change 
associated to the flow of inertial Langevin processes \cite{McPe2001}.  One way to prove this property 
is by deriving the scheme from a variational principle and analyzing its boundary terms as done 
in the context of stochastic Hamiltonian systems without dissipation in \cite{BoOw2009A}.

\paragraph{Organization of the Paper}

In \S \ref{MainResults} the main results of the paper are presented.
\S \ref{Preliminaries} states all of the hypotheses used in the paper.  
These hypotheses are invoked in \S \ref{AnalysisOfGLA} where it is 
proved that GLA is pathwise convergent on finite time intervals (Theorem~\ref{GLAaccuracy}), 
GLA is geometrically ergodic with respect to a nearby invariant measure  on infinite time intervals
(Theorem~\ref{GLAgeometricergodicity}),  and the equilibrium statistical 
accuracy of GLA is governed by the order of accuracy of the variational integrator
in representing the Hamiltonian (Theorem~\ref{GLABGaccuracy}).  
In \S \ref{Validation}, numerical validation is provided.   In the Appendix 
we review some basic facts on variational integrators for the reader's 
convenience.

\paragraph{Limitations}

In a nutshell the main result of the paper states that if GLA is geometrically ergodic with 
respect to a unique invariant measure, the error in sampling the invariant measure of the 
SDE is determined by the energy error in GLA's variational integrator.
Now if the inertial Langevin equations have nonglobally
Lipschitz drift and the GLA is based on an explicit variational integrator, 
GLA may fail to be geometrically ergodic.  In particular, for any step-size there 
will be regions in phase space where the Lipschitz constant of the drift is beyond 
the linear stability threshold of GLA's underlying variational 
integrator.  Hence, an explicit GLA will be stochastically unstable.  Since
our results rely on a strong form of stochastic stability of GLA (namely,
geometric ergodicity), they may not hold in this case.

To stochastically stabilize GLA, one can use GLA as a proposal move in a 
Metropolis-Hasting method.  For a numerical analysis of the Metropolis-adjusted 
scheme, the reader is referred to \cite{BoVa2009A}.   A difficulty in Metropolizing
inertial Langevin is that its solution is not reversible.  
However, the solution composed with a momentum flip is reversible.  
The role of momentum flips in Metropolizing Langevin integrators is qualitatively 
and computationally analyzed in \cite{ScLeStCaCa2006,BuPa2007,AkBoRe2009}. 
For a quantitative treatment of the role of momentum flips in pathwise accuracy 
the reader is referred to \cite{BoVa2009A}.

\paragraph{Extension to manifolds}

For the sake of clarity, the setting of this paper is inertial Langevin equations on a flat space, 
but we stress GLA and its properties generalize to manifolds.
We refer to Remark \ref{RemarkGen} and to \cite{BoOw2009B}
for details.

\paragraph{Acknowledgements}

We wish to thank Christof Sch\"{u}tte and Eric Vanden-Eijnden for valuable advice.  
Denis Talay and Nicolas Champagnat helped sharpen the main result of the paper and put the 
paper in a better context.

This work was supported in part by DARPA DSO under AFOSR contract FA9550-07-C-0024.
N. B-R.~would like to acknowledge the support of the Berlin Mathematical School (BMS) and 
the United States National Science Foundation through NSF Fellowship \# DMS-0803095. 


\section{Main Results of Paper} \label{MainResults}

\paragraph{Inertial Langevin}

The setting of the paper is a dissipative stochastic Hamiltonian system 
(as in \cite{So1994, Ta2002})  on $\mathbb{R}^n$,  with phase space 
$\mathbb{R}^{2n}$, and smooth Hamilton $H \in C^{\infty}(\mathbb{R}^{2n}, \mathbb{R})$.
In terms of which consider the following inertial Langevin equations 
\begin{align} \label{InertialLangevin}
\begin{cases}
d \mathbf{Y} &= \mathbb{J} \nabla H(  \mathbf{Y} ) dt 
- \gamma \boldsymbol{C} \nabla H(  \mathbf{Y} ) dt 
+ \sqrt{2 \gamma \beta^{-1}} \boldsymbol{C}  d \mathbf{W} \\
\boldsymbol{Y}(0) &= \boldsymbol{x} \in \mathbb{R}^{2n}
\end{cases}
\end{align}
where the following matrices have been introduced:
\[
\mathbb{J}= 
\begin{bmatrix}
0 & \mathbf{I} \\
-\mathbf{I} &  0 
\end{bmatrix},~~~
\boldsymbol{C} = 
\begin{bmatrix}
0 & 0 \\
0 & \mathbf{I} 
\end{bmatrix}  \text{.}
\]
Here $\boldsymbol{W}$ is a standard $2n$-dimensional Wiener process, 
or Brownian motion, $\beta>0$ is a parameter referred to as the inverse 
temperature, and $\gamma>0$ is referred to as the friction factor.  We will often
write the continuous solution in component form as
$\mathbf{Y}(t)=(\boldsymbol{Q}(t),\boldsymbol{P}(t))$ where $\boldsymbol{Q}(t)$ 
and  $\boldsymbol{P}(t)$ represent the instantaneous configuration and 
momentum of the system, respectively.   We shall assume the Hamiltonian
is separable and quadratic in momentum:
\[
H(\boldsymbol{q}, \boldsymbol{p}) = 
\frac{1}{2}  \boldsymbol{p}^T \boldsymbol{M}^{-1} \boldsymbol{p} + U(\boldsymbol{q}) \text{,}
\]
where $\boldsymbol{M}$ is a symmetric positive definite mass matrix and 
$U$ is a potential energy function. 
Despite the degenerate diffusion in \eqref{InertialLangevin}, 
under certain regularity conditions on $U$, the solution to this SDE is geometrically 
ergodic with respect to an invariant probability measure $\mu$ with the following 
density\cite{Ta2002}:
\begin{equation} \label{BGdistribution}
\pi(\boldsymbol{q}, \boldsymbol{p}) = 
Z^{-1} \exp\left( -  \beta H(\boldsymbol{q}, \boldsymbol{p}) \right) \text{,}
\end{equation}
where $Z= 
\int_{\mathbb{R}^{2n}} 
\exp\left(-   \beta H(\boldsymbol{q}, \boldsymbol{p}) \right) 
d\boldsymbol{q} d\boldsymbol{p}$.   The invariant measure $\mu$ is known as
the {\em Boltzmann-Gibbs measure}.

\paragraph{Geometric Langevin Algorithm}

Let $N$ and $h$ be given, set $T= N h$ and $t_k = h k $ for
$k=0,...,N$.   Observe that the conservative part of \eqref{InertialLangevin} defines 
Hamilton's equations for the Hamiltonian $H$: \[
d \boldsymbol{Y} = \mathbb{J} \nabla H( \boldsymbol{Y} ) dt
\] or,
\begin{equation} \label{HamiltonsEquations}
\begin{cases} 
d \boldsymbol{Q} &= \boldsymbol{M}^{-1} \boldsymbol{P} dt  \\
d \boldsymbol{P}  &= - \nabla U(\boldsymbol{Q}) dt 
\end{cases}
\end{equation}
Let $h$ be a fixed step-size.  We apply a $pth$-order accurate variational integrator, 
$\theta_h: \mathbb{R}^{2n} \to \mathbb{R}^{2n}$, to approximate the Hamiltonian flow of \eqref{HamiltonsEquations} 
($p \ge 1$).  The nonconservative part of the inertial Langevin equation defines an 
Ornstein-Uhlenbeck process in momentum governed by the following linear SDE: \[
d \mathbf{Y} =
- \gamma \boldsymbol{C} \nabla H(  \mathbf{Y} ) dt 
+ \sqrt{2 \gamma \beta^{-1}} \boldsymbol{C}  d \mathbf{W}
\] or,
\begin{equation} \label{OrnsteinUhlenbeck}
\begin{cases} 
d \boldsymbol{Q} &= 0 \\
d \boldsymbol{P}  &= - \gamma \boldsymbol{M}^{-1} \boldsymbol{P} dt 
				 + \sqrt{2 \beta^{-1} \gamma} d \boldsymbol{W} 
\end{cases}
\end{equation}
Reference \cite{PaStZy2008} aptly refers to \eqref{OrnsteinUhlenbeck} as a Gaussian SDE since its 
stationary distribution on $\mathbb{R}^{2n}$ is Gaussian in momentum.

The following stochastic evolution map $\psi_{t_k+h, t_k} :  \mathbb{R}^{2n} \to \mathbb{R}^{2n}$ defines the 
stochastic flow of \eqref{OrnsteinUhlenbeck}:
\begin{align} \label{exactpsi}
& \psi_{t_k+h,t_k}:  \nonumber \\
& \qquad (\boldsymbol{q},\boldsymbol{p}) \mapsto  \left(\boldsymbol{q},  e^{-\gamma \boldsymbol{M}^{-1} h} \boldsymbol{p} 
+ \sqrt{2 \beta^{-1} \gamma} \int_{t_k}^{t_k+h} e^{-\gamma \boldsymbol{M}^{-1} (t_k+h-s)} d \boldsymbol{W}(s) \right) \text{,}  
\end{align}
with $\psi_{s,s}(\boldsymbol{x}) = \boldsymbol{x}$ and for  $0 \le r \le s \le t$ 
recall the Chapman-Kolmogorov identity $\psi_{t,s} \circ \psi_{s,r}(\boldsymbol{x}) = \psi_{t,r}(\boldsymbol{x})$
for all $\boldsymbol{x} \in \mathbb{R}^{2n}$.   For the distribution of the solution, the stochastic flow will be denoted simply by $\psi_{h}$.   
To make this map explicit, let  $\boldsymbol{\xi} \sim \mathcal{N}(\boldsymbol{0},\boldsymbol{I})$ and set
\begin{align*}
 \boldsymbol{\Sigma}_h :=&   
 2 \beta^{-1}  \gamma \E\left\{ \left( \int_0^h e^{-\gamma \boldsymbol{M}^{-1} (h-s)} d \boldsymbol{W}(s) \right) 
 \left( \int_0^h e^{-\gamma \boldsymbol{M}^{-1} (h-s)} d \boldsymbol{W}(s) \right)^T \right\} \\
 =& \beta^{-1} \left( \boldsymbol{I}  - \exp(- 2 \gamma \boldsymbol{M}^{-1} h) \right) \boldsymbol{M}  
\end{align*}
and define $A_h$ to be the decomposition matrix arising from the Cholesky factorization 
of $\boldsymbol{\Sigma}_{h}$, i.e., $\boldsymbol{A}_{h} \boldsymbol{A}_{h}^T = \boldsymbol{\Sigma}_{h}$.
In terms of these, introduce the following flow map:
\begin{align} \label{psi}
 \psi_h:  (\boldsymbol{q},\boldsymbol{p}) \mapsto  
 \left(\boldsymbol{q},  e^{-\gamma \boldsymbol{M}^{-1} h} \boldsymbol{p} + A_h \boldsymbol{\xi} \right) \text{.}  
\end{align}
In distribution \eqref{psi} is identical to \eqref{exactpsi}.

Given $\boldsymbol{X}_k \in \mathbb{R}^{2n}$ and $h$, the {\em Geometric Langevin Algorithm} (GLA) is defined as
the following Lie-Trotter splitting integrator for \eqref{InertialLangevin}:
\begin{equation} \label{GLA}
\boldsymbol{X}_{k+1} := \theta_h \circ \psi_{t_k+h,t_k} (\boldsymbol{X}_k) 
\end{equation}
for $k=0,...,N-1$ with $\boldsymbol{X}_0 = \boldsymbol{x}$.

\begin{Remark}\label{RemarkGen}
Observe that GLA generalizes to inertial Langevin equations on a manifold.
This generalization is possible because 
its symplectic component can be defined as a variational integrator for Hamilton's equations
on a manifold and its Ornstein-Uhlenbeck component can be defined as the solution of
an SDE on a vector space.  This generalization is motivated by molecular systems
with holonomic constraints.  As mentioned in the introduction, variational integrators can
incorporate holonomic constraints.   In the special case that the configuration manifold of GLA
is compact (e.g., $SO(3)$) and the potential energy is smooth, then the assumption on the 
geometric ergodicity of GLA is typically satisfied for sufficiently small time-step.
\end{Remark}

Given $\boldsymbol{Z}_k \in \mathbb{R}^{2n}$ and $h$, let $\vartheta_h: \mathbb{R}^{2n} \to \mathbb{R}^{2n}$ denote
the exact time-$h$ flow of Hamilton's equations \eqref{HamiltonsEquations}.  
The {\em Exact Splitting} is defined as
\begin{equation}  \label{ExactSplitting}
\boldsymbol{Z}_{k+1} := \vartheta_h \circ \psi_{t_k+h, t_k} ( \boldsymbol{Z}_k )
\end{equation}
for $k=0,...,N-1$ with $\boldsymbol{Z}_0 = \boldsymbol{x}$.

\paragraph{Properties of GLA}

The assumptions that appear in the following theorems are provided in \S \ref{Preliminaries}.

Let $\E^{\boldsymbol{x}}\{ \cdot \}$ denote the expectation conditioned on the initial 
condition being $\boldsymbol{x} \in \mathbb{R}^{2n}$.  In terms of this notation, we can quantify 
the strong convergence of GLA  to solution trajectories of  inertial Langevin 
\eqref{InertialLangevin}.  The precise statement follows
\begin{theorem} [Pathwise Accuracy] \label{GLAaccuracy}
Assume~\ref{sa1} and \ref{sa2}.  
For any $T>0$, there exist $h_c>0$ and $C(T)>0$, such that for all $h<h_c$, 
$\boldsymbol{x} \in \mathbb{R}^{2n}$, and $t\in[0,T]$, GLA satisfies
\begin{equation} 
( \E^{\boldsymbol{x}} \{ | \boldsymbol{X}_{\lfloor t/h \rfloor} - \boldsymbol{Y}( \lfloor t/h \rfloor h) |^2 \} )^{1/2}  \le 
C(T) (1+|\boldsymbol{x}|^2)^{1/2} h \text{.} 
\end{equation}
\end{theorem}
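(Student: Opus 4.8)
The plan is to interpolate between GLA and the true diffusion through the Exact Splitting $\boldsymbol{Z}_k$ of \eqref{ExactSplitting}, and to control the two resulting discrepancies by local-error estimates plus a discrete Gr\"onwall argument. Fix $T>0$, let $t\in[0,T]$, and set $n=\lfloor t/h\rfloor\le N$. First I would apply the triangle inequality in $L^2(\P^{\boldsymbol{x}})$,
\begin{equation*}
\left( \E^{\boldsymbol{x}}\{ | \boldsymbol{X}_n - \boldsymbol{Y}(nh) |^2 \} \right)^{1/2}
\le \left( \E^{\boldsymbol{x}}\{ | \boldsymbol{X}_n - \boldsymbol{Z}_n |^2 \} \right)^{1/2}
+ \left( \E^{\boldsymbol{x}}\{ | \boldsymbol{Z}_n - \boldsymbol{Y}(nh) |^2 \} \right)^{1/2},
\end{equation*}
and then bound each term by $C(T)(1+|\boldsymbol{x}|^2)^{1/2}h$. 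Assumptions \ref{sa1}--\ref{sa2} would be used throughout to supply (i) the moment bounds $\sup_{k\le N}\E^{\boldsymbol{x}}\{|\boldsymbol{X}_k|^{2m}\}$, $\sup_{k\le N}\E^{\boldsymbol{x}}\{|\boldsymbol{Z}_k|^{2m}\}$ and $\sup_{s\le T}\E^{\boldsymbol{x}}\{|\boldsymbol{Y}(s)|^{2m}\}\le C(T)(1+|\boldsymbol{x}|^{2m})$ for the finitely many exponents $m$ that occur, and (ii) the Lipschitz and polynomial-growth control on $\nabla H$ and on the maps $\vartheta_h,\theta_h,\psi_h$ that the local estimates below need.

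\emph{Step 1: Exact Splitting against the true diffusion.} One step of the Exact Splitting is $\vartheta_h\circ\psi_{t_k+h,t_k}$. Since the noise in \eqref{InertialLangevin} is additive and the Ornstein--Uhlenbeck factor \eqref{exactpsi} is integrated exactly, I would compare, starting from a common point $\boldsymbol{z}$, the It\^o--Taylor expansion of the true time-$h$ transition of \eqref{InertialLangevin} with that of $\vartheta_h\circ\psi_h$, obtaining the one-step estimates
\begin{gather*}
\left| \E\{ \boldsymbol{Y}_{\boldsymbol{z}}(h) - \vartheta_h\circ\psi_h(\boldsymbol{z}) \} \right| \le C\,(1+|\boldsymbol{z}|^2)^{1/2}\, h^{2}, \\
\left( \E\{ | \boldsymbol{Y}_{\boldsymbol{z}}(h) - \vartheta_h\circ\psi_h(\boldsymbol{z}) |^2 \} \right)^{1/2} \le C\,(1+|\boldsymbol{z}|^2)^{1/2}\, h^{3/2},
\end{gather*}
where $\boldsymbol{Y}_{\boldsymbol{z}}$ solves \eqref{InertialLangevin} from $\boldsymbol{z}$; the leading discrepancy is the Baker--Campbell--Hausdorff-type commutator of the Hamiltonian vector field with the dissipative/stochastic one, which averages out at order $h^2$ but only cancels to order $h^{3/2}$ in mean square. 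These are precisely the hypotheses of the fundamental theorem on the mean-square order of one-step approximations (as in Milstein and Tretyakov), which converts these local orders, together with the moment bounds, into
\begin{equation*}
\left( \E^{\boldsymbol{x}}\{ | \boldsymbol{Z}_n - \boldsymbol{Y}(nh) |^2 \} \right)^{1/2} \le C(T)\,(1+|\boldsymbol{x}|^2)^{1/2}\, h.
\end{equation*}

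\emph{Step 2: GLA against the Exact Splitting.} Here $\boldsymbol{X}_k$ and $\boldsymbol{Z}_k$ differ only through $\theta_h$ versus $\vartheta_h$. Writing $\boldsymbol{U}_k=\psi_{t_k+h,t_k}(\boldsymbol{X}_k)$ and $\boldsymbol{V}_k=\psi_{t_k+h,t_k}(\boldsymbol{Z}_k)$ and splitting
\begin{equation*}
\boldsymbol{X}_{k+1}-\boldsymbol{Z}_{k+1}
= \big( \theta_h(\boldsymbol{U}_k) - \vartheta_h(\boldsymbol{U}_k) \big)
+ \big( \vartheta_h(\boldsymbol{U}_k) - \vartheta_h(\boldsymbol{V}_k) \big),
\end{equation*}
I would bound the first bracket by the deterministic one-step error of a $p$th-order variational integrator, $\le C h^{p+1}(1+|\boldsymbol{U}_k|^{q})$ (see the Appendix), which contributes $C h^{p+1}(1+|\boldsymbol{x}|^2)^{1/2}$ after taking $L^2$-norms and using the moment bounds; and the second bracket by the local Lipschitz constant of $\vartheta_h$ times $\|\boldsymbol{U}_k-\boldsymbol{V}_k\|$, noting that $\psi_{t_k+h,t_k}$ is the identity in the configuration component and a contraction in the momentum component, so that $\|\boldsymbol{U}_k-\boldsymbol{V}_k\|\le\|\boldsymbol{X}_k-\boldsymbol{Z}_k\|$ and hence $\|\vartheta_h(\boldsymbol{U}_k)-\vartheta_h(\boldsymbol{V}_k)\|\le(1+Ch)\|\boldsymbol{X}_k-\boldsymbol{Z}_k\|$. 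Thus $e_k:=(\E^{\boldsymbol{x}}\{|\boldsymbol{X}_k-\boldsymbol{Z}_k|^2\})^{1/2}$ obeys $e_0=0$ and $e_{k+1}\le(1+Ch)e_k+Ch^{p+1}(1+|\boldsymbol{x}|^2)^{1/2}$, so the discrete Gr\"onwall inequality over $n\le T/h$ steps yields $e_n\le C(T)(1+|\boldsymbol{x}|^2)^{1/2}h^{p}\le C(T)(1+|\boldsymbol{x}|^2)^{1/2}h$ because $p\ge1$.

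Adding the bounds from Steps 1 and 2 and relabelling the constant proves the theorem, with $h_c$ taken to be the smallest of the radii of validity of the It\^o--Taylor and variational-integrator local estimates and of the one-step Lipschitz bounds. \textbf{Main obstacle.} I expect Step 1 to be the crux: one has to push the one-step It\^o--Taylor expansions of \eqref{InertialLangevin} and of $\vartheta_h\circ\psi_h$ far enough to isolate the leading commutator term, verify that it is $O(h^2)$ in mean and $O(h^{3/2})$ in mean square, and do this with constants that are only polynomial in $\boldsymbol{z}$ --- this is where \ref{sa1}--\ref{sa2} do the real work. The remaining ingredients (the triangle inequality, the moment bounds, the variational-integrator estimate from the Appendix, and the Gr\"onwall recursion) are routine.
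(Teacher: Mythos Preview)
Your proposal is correct and follows essentially the same strategy as the paper: interpolate through the Exact Splitting $\boldsymbol{Z}_k$, establish the one-step mean and mean-square local errors $O(h^{2})$ and $O(h^{3/2})$ for $\boldsymbol{Z}$ versus $\boldsymbol{Y}$ (the paper isolates this as a separate lemma in the Appendix via direct integration of \eqref{InertialLangevin} and \eqref{hamiltonseqns}), invoke the Milstein--Tretyakov fundamental theorem, and then handle $\boldsymbol{X}$ versus $\boldsymbol{Z}$ using {\bf V2}. The only cosmetic difference is that for the $\boldsymbol{X}$--$\boldsymbol{Z}$ comparison the paper again quotes Milstein--Tretyakov Theorem~1.1, whereas you write out the discrete Gr\"onwall recursion by hand (exploiting that $\psi_{t_k+h,t_k}$ is non-expansive and $\vartheta_h$ has Lipschitz constant $1+Ch$); these are equivalent, and your identification of Step~1 as the place where the real work sits matches the paper's emphasis.
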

This result is expected because a Lie-Trotter splitting is first-order for deterministic ODEs, 
and the noise in \eqref{InertialLangevin} is additive.

Using this pathwise convergence,  it is shown that GLA is geometrically ergodic with respect to
a discrete invariant measure $\mu_h$.
\begin{theorem} [Geometric Ergodicity] \label{GLAgeometricergodicity}
Assume \ref{sa1}, \ref{sa2}, and \ref{sa3}.   Then GLA is geometrically ergodic with respect 
to a discrete invariant measure $\mu_h$ and the continuous Lyapunov
function (cf.~Assumption~\ref{sa3}).  That is, there exist $h_c>0$,  $\lambda>0$ ,  and 
$C_3 > 0$, such that for all $h<h_c$ and for all $k \ge 2$, 
\[
| \E^{\boldsymbol{x}} \left\{ f( \boldsymbol{X}_k ) \right\} - \mu_h(f) | 
\le C_3 V(\boldsymbol{x}) e^{-\lambda k h}, ~~\forall~\boldsymbol{x} \in \mathbb{R}^{2n},
\]
and for all  test functions satisfying $| f(\boldsymbol{y}) | \le C_3 V(\boldsymbol{y})$
 for all $\boldsymbol{y} \in \mathbb{R}^{2n}$.
\end{theorem}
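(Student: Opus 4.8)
The plan is to verify the two classical hypotheses that, via the discrete-time Meyn--Tweedie / Harris machinery, imply geometric ergodicity of a Markov chain in a $V$-weighted total variation norm: (i) a \emph{geometric drift} (Lyapunov) condition for one step of GLA with the continuous Lyapunov function $V$ of Assumption~\ref{sa3}, and (ii) a \emph{minorization} condition on the sublevel sets $\{V \le R\}$ for the two-step kernel $P^2$ --- it is the need for two steps, rather than one, that forces the restriction $k \ge 2$ in the statement. Writing $P$ for the one-step Markov operator of \eqref{GLA}, the target is $PV \le (1 - c h)\,V + C h$ on $\mathbb{R}^{2n}$ for some $c, C > 0$ and all $h < h_c$, together with $P^2(\boldsymbol{x}, \cdot) \ge \eta\, \nu(\cdot)$ for all $\boldsymbol{x}$ with $V(\boldsymbol{x}) \le R$, for a suitable $R$ read off from the drift constants, a probability measure $\nu$, and $\eta > 0$ bounded away from $0$ uniformly in $h$. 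The final quantitative statement, with the rate written as $e^{-\lambda k h}$, then follows from a version of Harris's theorem in which the contraction rate is tracked explicitly in terms of $c$, $C$, $R$ and $\eta$.

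For the drift condition I would split the GLA step as $\theta_h \circ \psi_{t_k+h,t_k}$ and estimate the two halves separately. The Ornstein--Uhlenbeck half-step $\psi_{t_k+h,t_k}$ leaves $\boldsymbol{q}$ fixed, damps $\boldsymbol{p}$ by $e^{-\gamma \boldsymbol{M}^{-1} h} = \boldsymbol{I} - \gamma \boldsymbol{M}^{-1} h + O(h^2)$, and injects Gaussian noise of covariance $\boldsymbol{\Sigma}_h = O(h)$; since $V$ is coercive with leading behaviour comparable to $H$ and at most quadratic cross terms (per Assumption~\ref{sa3}), a Taylor expansion gives $\E\{ V \circ \psi_{t_k+h,t_k}(\boldsymbol{z}) \} \le V(\boldsymbol{z}) - c_1 h\,(\text{kinetic part of } V)(\boldsymbol{z}) + C_1 h$. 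The Hamiltonian half-step $\theta_h$ is a $p$th-order accurate variational integrator, so $\theta_h(\boldsymbol{z}) = \vartheta_h(\boldsymbol{z}) + O(h^{p+1})$ on compacts with controlled growth; combined with the fact that $V$ changes by at most $O(h)(1 + V)$ along the exact Hamiltonian flow $\vartheta_h$ (Gr\"onwall, using smoothness of $H$ and $V$), this yields $V(\theta_h(\boldsymbol{z})) \le V(\boldsymbol{z}) + C_2 h\,(1 + V(\boldsymbol{z}))$. The role of Assumption~\ref{sa3} is decisive here: it already supplies a drift condition for the exact splitting $\vartheta_h \circ \psi_{t_k+h,t_k}$, so the only new input for GLA is that replacing $\vartheta_h$ by $\theta_h$ perturbs it by $O(h^{p+1})$, which the finite-time accuracy of the variational integrator provides. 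Composing the two estimates and absorbing the $O(h^2)$ and $O(h^{p+1})$ remainders into the $O(h)$ terms for $h$ small enough gives $PV \le (1 - ch)V + Ch$.

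For the minorization I would exploit the nondegeneracy of the injected noise. For $h > 0$ the covariance $\boldsymbol{\Sigma}_h = \beta^{-1}(\boldsymbol{I} - e^{-2\gamma \boldsymbol{M}^{-1} h})\boldsymbol{M}$ is strictly positive definite, so after one step the momentum has a Gaussian density while the configuration is still a deterministic function of the previous state; after a \emph{second} step the noise of the first Ornstein--Uhlenbeck half-step has been pushed forward through $\theta_h$ into the configuration variable, so the map $(\boldsymbol{\xi}_1, \boldsymbol{\xi}_2) \mapsto \boldsymbol{X}_2$ is a smooth submersion onto $\mathbb{R}^{2n}$ whenever $h < h_c$ (for $h$ small, $\theta_h$ is a near-identity diffeomorphism, so its differential is invertible and the relevant Jacobian has full rank $2n$). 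Hence $\boldsymbol{X}_2$ has a jointly continuous, strictly positive density. To obtain a lower bound \emph{uniform in} $h$, I would rescale the noise variables, noting that $\boldsymbol{\Sigma}_h \sim h$ produces $\Theta(h)$ spread in momentum and $\Theta(h^3)$ spread in configuration; changing variables accordingly exhibits a density whose restriction to any fixed compact set is bounded below by a constant independent of $h < h_c$. This gives $P^2(\boldsymbol{x}, \cdot) \ge \eta\,\nu(\cdot)$ on $\{V \le R\}$ with $\eta$ uniform in $h$, and, together with the drift condition (which makes $\{V \le R\}$ a recurrent ``small'' set), the discrete Harris theorem delivers the unique $\mu_h$, the bound $\mu_h(V) < \infty$, and the claimed exponential estimate for all $f$ with $|f| \le C_3 V$ and all $k \ge 2$.

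The main obstacle is the uniform-in-$h$ minorization: one must track how the $O(h)$ Gaussian perturbation in momentum propagates, through a single application of the variational integrator $\theta_h$, into an $O(h^3)$ nondegenerate perturbation of the configuration, and then show that the resulting two-step transition density has an $h$-independent positive lower bound on compact sets --- the discrete analogue of hypoellipticity for the kinetic Fokker--Planck operator. A secondary technical point is ensuring the contraction rate $\lambda$ does not degenerate as $h \to 0$, which requires the quantitative form of Harris's theorem with drift and minorization constants scaling as $1 - ch$ and (uniform) $\eta$ respectively; by contrast, once Assumption~\ref{sa3} is in force the drift half is comparatively routine.
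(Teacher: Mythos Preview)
Your high-level strategy matches the paper's: verify a one-step Lyapunov (drift) condition and a two-step minorization, then invoke a Harris-type theorem. The differences lie in how each half is executed.

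For the drift, the paper does not do the Taylor-expansion computation you sketch. Instead it cites Theorem~7.2 of Mattingly--Stuart--Higham, which says that if the continuous process has an essentially quadratic Lyapunov function and the discretization satisfies a single-step mean-squared error bound together with finite second moments, then the discretization inherits $V$ up to a constant. These inputs are already supplied by the pathwise-accuracy estimates (Theorem~\ref{GLAaccuracy} and its ingredients), so the drift comes for free. Your direct approach works too and is more self-contained, but the paper's route explains why pathwise convergence was proved first.

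For the minorization, the paper does something more structural than your submersion argument: it writes the two-step transition density \emph{explicitly} using the discrete Lagrangian. Composing $\psi_h, \theta_h, \psi_h, \theta_h$ and integrating out the Dirac deltas produces
\[
q_h\bigl((\boldsymbol{q},\boldsymbol{p}),(\bar{\boldsymbol{q}},\bar{\boldsymbol{p}})\bigr)
= |\det D_{12} L_d(\boldsymbol{q},\boldsymbol{q}_1,h)|\; o_h(\boldsymbol{p}, -D_1 L_d)\; o_h(D_2 L_d, \boldsymbol{p}_3),
\]
with $(\boldsymbol{q}_1,\boldsymbol{p}_3)=\theta_h^{-1}(\bar{\boldsymbol{q}},\bar{\boldsymbol{p}})$ and $o_h$ the OU Gaussian kernel. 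Hyperregularity ({\bf V1}) guarantees the Jacobian factor is nonvanishing and that the change of variables $\boldsymbol{p}_1 \mapsto \boldsymbol{q}_1$ is globally well-defined, so $q_h$ is visibly smooth and strictly positive; Lemma~2.3 of the same reference then gives the minorization. This is exactly where {\bf V1} enters the paper, whereas your argument only uses that $\theta_h$ is a near-identity diffeomorphism for small $h$ and never touches the discrete Lagrangian. Your route is softer and would apply to non-variational volume-preserving integrators, but it leaves the $h$-uniformity of $\eta$ (which you correctly flag as the main obstacle) to a rescaling argument that would need to be written out carefully; the paper, by contrast, simply cites the abstract result and does not track the $h$-dependence of the minorization constant explicitly.
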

We stress this result is a consequence of strong convergence of GLA and the 
assumptions made on the potential energy and variational integrator.   These assumptions
are sufficient, but not necessary to guarantee this result.

Using geometric ergodicity we can quantify the equilibrium statistical accuracy of GLA.
If $p$ represents the global accuracy of GLA's underlying variational integrator, then 
$\mu_h$ is in TV distance $O(h^p)$ away from the Boltzmann-Gibbs measure $\mu$.   
To be precise, the main result of the paper states
\begin{theorem} [Long-Run Accuracy] \label{GLABGaccuracy}
Assume \ref{sa1}, \ref{sa2}, and \ref{sa3}.  
Let $\mu_h$ denote the discrete invariant measure of GLA. 
Then, there exist $C>0$ and $h_c>0$, such that for all $h<h_c$, 
\[
  | \mu - \mu_h |_{TV} \le C h^{p}  \text{.}
\]
\end{theorem}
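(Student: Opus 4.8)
The plan is to combine two structural facts about the two ``halves'' of the splitting with the geometric ergodicity already established in Theorem~\ref{GLAgeometricergodicity}. The key point is that, although GLA is only a first--order splitting scheme, the Boltzmann--Gibbs measure $\mu$ is \emph{exactly} invariant under each of $\psi_h$ and the \emph{exact} Hamiltonian flow $\vartheta_h$, so the only defect introduced in one GLA step is the (backward error) energy error of the variational integrator $\theta_h$, which is $O(h^{p+1})$; geometric ergodicity then converts this per--step defect into an $O(h^{p})$ error on the invariant measure, via the usual $1/h$ factor from summing a geometric series with ratio $e^{-\lambda h}$. This is a Talay--Tubaro--type argument adapted to the splitting structure.

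First I would record the invariance facts. Since the Hamiltonian is separable and quadratic in momentum, $\mu = \mu_Q \otimes \mathcal N(\boldsymbol 0,\beta^{-1}\boldsymbol M)$, and the Ornstein--Uhlenbeck flow \eqref{psi} fixes $\boldsymbol q$ while mapping $\boldsymbol p$ to $e^{-\gamma\boldsymbol M^{-1}h}\boldsymbol p + A_h\boldsymbol\xi$; a direct covariance computation ($e^{-2\gamma\boldsymbol M^{-1}h}\beta^{-1}\boldsymbol M + \boldsymbol\Sigma_h = \beta^{-1}\boldsymbol M$) shows $\mathcal N(\boldsymbol 0,\beta^{-1}\boldsymbol M)$ is stationary for \eqref{OrnsteinUhlenbeck}, hence $(\psi_h)_*\mu = \mu$. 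Likewise $\vartheta_h$ is symplectic (so volume preserving) and conserves $H$, whence $(\vartheta_h)_*\mu = \mu$. Let $P_h f(\boldsymbol x) = \E^{\boldsymbol x}\{f(\boldsymbol X_1)\}$ be the GLA transition operator. Because $\boldsymbol X_1 = \theta_h\circ\psi_h(\boldsymbol X_0)$ and $(\psi_h)_*\mu=\mu$, starting from $\boldsymbol X_0\sim\mu$ the law of $\boldsymbol X_1$ is $(\theta_h)_*\mu$; equivalently $\int P_h f\,d\mu = \int f\circ\theta_h\,d\mu$ for every test function $f$. Subtracting $\int(f\circ\vartheta_h - f)\,d\mu = 0$, I get the one-step identity $\int (P_h f - f)\,d\mu = \int (f\circ\theta_h - f\circ\vartheta_h)\,d\mu = \int f\,\big(\tfrac{d(\theta_h)_*\mu}{d\mu} - 1\big)\,d\mu$, and since $\theta_h$ is volume preserving, $\tfrac{d(\theta_h)_*\mu}{d\mu}(\boldsymbol x) = \pi(\theta_h^{-1}\boldsymbol x)/\pi(\boldsymbol x) = \exp\!\big(\beta(H(\boldsymbol x) - H(\theta_h^{-1}\boldsymbol x))\big)$. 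The exponent is exactly a one-step energy error of the $p$th-order variational integrator, which by the local-error estimate for variational integrators (reviewed in the Appendix) together with $H\circ\vartheta_h = H$ is $O(h^{p+1})$ with a constant controlled by the Lyapunov function $V$; Assumptions~\ref{sa1}--\ref{sa3} then yield $\int V\,\big|\tfrac{d(\theta_h)_*\mu}{d\mu} - 1\big|\,d\mu \le C h^{p+1}$, hence $\big|\int (P_h g - g)\,d\mu\big| \le C h^{p+1}\|g\|_V$ for all $g$ with $\|g\|_V := \sup |g|/V < \infty$.

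Finally I would run the ergodic-averaging step. For $\|f\|_V \le 1$, Theorem~\ref{GLAgeometricergodicity} gives $P_h^j f = \mu_h(f) + g_j$ with $\|g_j\|_V \le C e^{-\lambda j h}$. Since $P_h$ fixes constants, $\int(P_h^{j+1}f - P_h^j f)\,d\mu = \int (P_h - I) g_j\,d\mu$, which by the previous paragraph is bounded by $C h^{p+1}e^{-\lambda j h}$. Telescoping and using $1 - e^{-\lambda h} \ge \lambda h/2$ for small $h$, $\big|\int P_h^k f\,d\mu - \mu(f)\big| \le C h^{p+1}\sum_{j\ge 0} e^{-\lambda j h} \le C' h^{p}$; letting $k\to\infty$, geometric ergodicity forces $\int P_h^k f\,d\mu \to \mu_h(f)$ (using $\int V\,d\mu < \infty$), so $|\mu(f) - \mu_h(f)| \le C' h^{p}$ for every $f$ with $\|f\|_V < \infty$, in particular for all $|f| \le 1$ since $V \ge 1$. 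Taking the supremum gives $|\mu - \mu_h|_{TV} \le C h^{p}$. The hard part will be the weighted one-step energy-error bound: one must verify that the local-error constant of $\theta_h$ and the Radon--Nikodym factor $\exp(\beta|H\circ\theta_h^{-1} - H|)$ are integrable against $V\,d\mu$ uniformly for small $h$ (and that $V\circ\theta_h \lesssim V$), which is precisely what the growth hypotheses on $U$ and on the variational integrator in \S\ref{Preliminaries} are designed to secure; the telescoping argument itself is then routine.
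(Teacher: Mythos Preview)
Your proposal is correct and follows essentially the same strategy as the paper's proof: the one-step identity $\int(P_h f-f)\,d\mu = \int f\,(e^{-\beta(H\circ\theta_h^{-1}-H)}-1)\,d\mu$ is the paper's Lemma~\ref{Deltah1}, your telescoping sum $\mu(P_h^N f)-\mu(f)=\sum_j\int(P_h-I)g_j\,d\mu$ with $g_j=P_h^jf-\mu_h(f)$ is exactly the paper's decomposition~\eqref{DeltahN}, and the geometric-series bound combined with the $O(h^{p+1})$ weighted energy-error estimate is Lemma~\ref{Deltahinfty}. The only cosmetic difference is that you phrase everything in operator/$\|\cdot\|_V$-norm language while the paper writes the integrals out explicitly; the ``hard part'' you flag (uniform $\mu$-integrability of $V\cdot|e^{\beta|H\circ\theta_h^{-1}-H|}-1|$) is precisely what the paper verifies using {\bf U1}, {\bf U2}, {\bf V2}, and the quadratic bounds on $V$.
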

There is a stronger argument in \cite{Ta2002} based on the Feynman-Kac formula that can extend
Theorem~\ref{GLABGaccuracy} to
\begin{equation}
| \mu(f) - \mu_h(f) | \le C h^p \text{,}
\end{equation}
for all test functions $f \in L^2_{\mu}(\mathbb{R}^{2n})$ that are smooth with polynomial growth at infinity.  
The paper proves Theorem~\ref{GLABGaccuracy} with a more direct strategy.  An important point
is that the proof is transparent since it involves a forward error analysis and does not rely on 
knowing the precise form of $\mu_h$.  The proof relies on the existence of $\mu_h$ and
the nature of the convergence of GLA from a nonequilibrium position.  Indeed, a backward error analysis 
of this discretization of the SDE \eqref{InertialLangevin} to characterize this invariant measure would 
be substantially  more involved.

\paragraph{Implications}

As a consequence of the TV error estimate derived in this paper, one can control the order of 
accuracy of  $\mu_h$  by controlling the order of accuracy of  GLA's underlying variational 
integrator.    This is the distinguishing feature of GLA. Existing theory would indicate the 
accuracy of $\mu_h$ is the same order as the weak or strong accuracy of GLA.   
Theorem~\ref{GLAaccuracy} states GLA is just first-order accurate on solution trajectories.
Hence, existing theory would suggest that the equilibrium statistical accuracy of GLA is
first-order, rather than $pth$-order accurate (where $p$ is the order of accuracy of GLA's
underlying variational integrator).

Existing theory would indicate to obtain a higher-order approximation of the invariant measure 
one would require a higher-order approximant to SDE \eqref{InertialLangevin}  which entails approximation of multiple 
$n$-dimensional stochastic integrals per time-step.  It is well-known that such higher-order 
discretizations of SDEs are computationally intensive.  In contrast, a step of 
GLA requires evaluation of a single, $n$-dimensional stochastic integral per time-step.   
According to the main result of this paper, the order of accuracy of the  
variational integrator can be used to tune the TV-distance in Theorem~\ref{GLABGaccuracy} 
to a desired tolerance.


\section{Preliminaries}  \label{Preliminaries}

The following assumptions on the potential energy, $U: Q \to \mathbb{R}$, will be used in this 
paper.  These hypotheses are the same as those made in \S 7 of  \cite{MaStHi2002}.

\begin{Assumption}[Assumptions on Potential Energy] \label{sa1}
The potential energy function $U \in C^{\infty}(\mathbb{R}^n, \mathbb{R})$ satisfies:
\begin{description}
\item[U1)] there exists a real constant $A_0 > 0$ such that  
\[
| \nabla U(\boldsymbol{q}_0) - \nabla U(\boldsymbol{q}_1)| \le 
A_0 | \boldsymbol{q}_0 - \boldsymbol{q}_1 | \text{,}~~\forall~\boldsymbol{q}_0 , \boldsymbol{q}_1 \in \mathbb{R}^n \text{.}
\]
\item[U2)] there exists a real constant $A_1 > 0$ such that
\[
U( \boldsymbol{q} )  \ge A_1 (1 + | \boldsymbol{q} | ^2 ) \text{,}  ~~\forall~ \boldsymbol{q} \in \mathbb{R}^n \text{.}
\]
\end{description}
\end{Assumption}

By standard results in stochastic analysis, condition {\bf U1} is sufficient to guarantee almost 
sure existence and pathwise uniqueness  of a solution to \eqref{InertialLangevin}.   The condition 
{\bf U2} ensures that $e^{-\beta H}$ is integrable over $\mathbb{R}^{2n}$, and hence, that the Boltzmann-Gibbs 
measure is a well-defined probability measure. Assuming the solution to \eqref{InertialLangevin} is 
geometrically ergodic, we will prove in this paper that conditions {\bf U1} - {\bf U2} together with the 
following assumptions on the variational integrator, $\theta_h: \mathbb{R}^{2n} \to \mathbb{R}^{2n}$, are sufficient 
(but not necessary) to guarantee geometric ergodicty of GLA.

\begin{Assumption}[Assumptions on Variational Integrator] \label{sa2}
For any $t>0$ let $\vartheta_t$ denote the exact Hamiltonian flow of \eqref{HamiltonsEquations}.
The variational integrator $\theta_h : \mathbb{R}^{2n} \to \mathbb{R}^{2n}$ satisfies the following.
\begin{description}
\item [V1)] $\theta_h$ is the discrete Hamiltonian map of a hyperregular discrete Lagrangian 
$L_d: \mathbb{R}^n \times \mathbb{R}^n \to \mathbb{R}$  (cf.~\eqref{HyperRegularity} and \cite{MaWe2001}).
\item[V2)] there exist constants $B_0>0$  and $h_c>0$, such that for any $h<h_c$,  
\[
| \theta_{h}(\boldsymbol{x}) - \vartheta_{h}(\boldsymbol{x}) | 
\le B_0 (1 + | \boldsymbol{x} |^2 )^{1/2} h^{p+1},~~\forall~\boldsymbol{x} \in \mathbb{R}^{2n} \text{.}
\]
\end{description}
\end{Assumption}

As discussed in Appendix I, the condition {\bf V1}  implies that $\theta_h$ is symplectic, and 
hence, Lebesgue measure preserving.  It will also be an important ingredient in proving 
Theorem~\ref{GLAgeometricergodicity} on geometric ergodicity of GLA.   The condition
{\bf V2} states that the integrator is locally $(p+1)th$-order accurate.

Finally, we make the following structural assumption on \eqref{InertialLangevin}.
\begin{Assumption}[Existence of a Lyapunov Function] \label{sa3}
There exists $V \in C^{\infty}( \mathbb{R}^{2n} , \mathbb{R})$ and constants $C_i >0$ such that 
\[
C_0 ( 1+   | \boldsymbol{x} |^2) \le V(\boldsymbol{x}) \le C_1 (1 + | \boldsymbol{x} |^2),~~
\nabla V(\boldsymbol{x}) \le C_2 (1+ | \boldsymbol{x} | ), ~~
\forall~\boldsymbol{x} \in \mathbb{R}^{2n},
\]
$\lim_{\boldsymbol{x} \to \infty} V(\boldsymbol{x}) = \infty$, $a>0$ and $c>0$, such that
for all $t>0$,
\[
\E^{\boldsymbol{x}} \{ V( \boldsymbol{Y}(t) ) \} \le 
e^{-a t } V( \boldsymbol{x} ) + \frac{c}{a} (1 - e^{-a t} ),~~\forall~\boldsymbol{x} \in \mathbb{R}^{2n} \text{.}
\]
\end{Assumption}


\section{Analysis of GLA}   \label{AnalysisOfGLA}

\subsection{Pathwise Convergence}

Here GLA is shown to be first-order mean-squared convergent, 
which is a notion of pathwise convergence to solutions of \eqref{InertialLangevin} \cite{Ta1995,MiTr2004}.  
The first-order accuracy of GLA on solution trajectories is not surprising because the 
method is derived from a Lie-Trotter splitting of \eqref{InertialLangevin}.  
It is simply a generalization of the well-known fact that Lie-Trotter splittings of deterministic 
ODEs yield first-order accurate methods.  This
generalization is possible despite the lack of regularity in solutions because the noise in 
\eqref{InertialLangevin} is additive.  Since the proof is standard, it will be kept terse.

\begin{thm21}[Pathwise Accuracy] 
Assume~\ref{sa1} and \ref{sa2}.
For any $T>0$, there exist $h_c>0$ and $C(T)>0$, such that for all $h<h_c$, 
$\boldsymbol{x} \in \mathbb{R}^{2n}$, and $t\in[0,T]$,
\begin{equation} 
( \E^{\boldsymbol{x}} \{ | \boldsymbol{X}_{\lfloor t/h \rfloor} - \boldsymbol{Y}( \lfloor t/h \rfloor h ) |^2 \} )^{1/2}  
\le C(T) (1+|\boldsymbol{x}|^2)^{1/2} h \text{.} 
\end{equation}
\end{thm21}

\begin{proof}
By standard results in stochastic analysis, condition {\bf U1} 
guarantees there a.s. exists a pathwise unique solution to \eqref{InertialLangevin}: $
\mathbf{Y}(t) \in \mathbb{R}^{2n}$ for $t \in [0, T]$ with $\mathbf{Y}(0) = \boldsymbol{x}$.   
Moreover, one can obtain the following bound on the second moment of of 
the solution: for all $T>0$, there exists a $C(T)>0$ such that for all $t \in [0, T]$,
\begin{equation} \label{Ymomentbound}
\E^{\boldsymbol{x}} \left\{ |  \boldsymbol{Y}(t)  |^2 \right\} \le C(T) (1 + | \boldsymbol{x} |^2) \text{.}
\end{equation}

We will use this bound to invoke Theorem~1.1 in \cite{MiTr2004} which enables one 
to deduce global mean-squared error estimates of a discretization from local mean-squared error 
and local mean deviation.    First, we establish this estimate for the exact splitting \eqref{ExactSplitting}.   
By using Assumption {\bf U1}, it is straightforward to show (see Lemma~\ref{singlesteperror}) 
that there exists $C>0$ such that:
\begin{equation} 
| \E^{\boldsymbol{x}} \{   \boldsymbol{Y}(h)  - \boldsymbol{Z}_{1}  \} | 
\le C \left( 1+ | \boldsymbol{x} |^2 \right)^{1/2}  h^{2} 
\end{equation}
and
\begin{equation}  \label{YZmserror}
\left( \E^{\boldsymbol{x}} \{ | \boldsymbol{Y}(h)  - \boldsymbol{Z}_{1} |^2 \} \right)^{1/2} 
\le C \left( 1+ | \boldsymbol{x} |^2 \right)^{1/2}  h^{3/2} 
\end{equation}
Together with \eqref{Ymomentbound} this implies there exist $h_c>0$ and $C(T)>0$,
such that for all $h<h_c$,  $t \in [0, T]$ and $\boldsymbol{x} \in \mathbb{R}^{2n}$:
\begin{equation} \label{Zmomentbound}
 \E^{\boldsymbol{x}} \{ | \boldsymbol{Z}_{\lfloor t/h \rfloor} |^2 \} 
\le C(T) ( 1 + | \boldsymbol{x} |^2 )
\end{equation}
Hence, by Theorem~1.1 in \cite{MiTr2004}, one can show that for all $T>0$,
there exist $h_c>0$ and $C(T)>0$, such that for all $h<h_c$,  $t \in [0, T]$ and $\boldsymbol{x} \in \mathbb{R}^{2n}$:
\begin{equation} \label{YZglobalmserror}
( \E^{\boldsymbol{x}} \{  |  \boldsymbol{Z}_{\lfloor t/h \rfloor} - \boldsymbol{Y}(\lfloor t/h \rfloor h)  |^2 \} )^{1/2}  
\le C(T) \left( 1+ | \boldsymbol{x} |^2 \right)^{1/2}  h
\text{.}
\end{equation}

Observe that the difference between a single step of 
GLA \eqref{GLA} and the exact splitting \eqref{ExactSplitting} can be written as 
\[
  \boldsymbol{X}_{1}  - \boldsymbol{Z}_{1}  =   
  (\theta_h - \vartheta_h ) \circ \psi_{h,0}(\boldsymbol{x})   \text{.}
\]
Using Assumption {\bf V2} one can show there exists $C>0$ such that
\begin{equation} \label{XZmserror}
\left( \E^{\boldsymbol{x}} \{ |  \boldsymbol{X}_{1}  - \boldsymbol{Z}_{1} |^2 \} \right)^{1/2} 
\le C \left( 1 + | \boldsymbol{x} |^2  \right)^{1/2} h^{p+1} \text{,}
\end{equation}
and, by Jensen's inequality:
\begin{equation} \label{XZmerror}
| \E^{\boldsymbol{x}} \{   \boldsymbol{X}_{1}  - \boldsymbol{Z}_{1}  \} |  
\le C \left( 1+ | \boldsymbol{x} |^2 \right)^{1/2}  h^{p+1} 
\text{.}
\end{equation}
Together with \eqref{Zmomentbound} this implies that there exist $h_c>0$ and $C(T)>0$,
such that for all $h<h_c$,  $t \in [0, T]$ and $\boldsymbol{x} \in \mathbb{R}^{2n}$:
\begin{equation} \label{GLAmomentbound}
 \E^{\boldsymbol{x}} \{ | \boldsymbol{X}_{\lfloor t/h \rfloor} |^2 \} 
 \le C(T) ( 1 + | \boldsymbol{x} |^2 )
\end{equation}
Using Assumption {\bf U1} and Theorem~1.1 of \cite{MiTr2004}, one can also show that for all $T>0$,
there exist $h_c>0$ and $C(T)>0$, such that for all $h<h_c$,  $t \in [0, T]$ and $\boldsymbol{x} \in \mathbb{R}^{2n}$:
\begin{equation} \label{XZglobalmserror}
( \E^{\boldsymbol{x}} \{  | \boldsymbol{X}_{\lfloor t/h \rfloor}  - \boldsymbol{Z}_{\lfloor t/h \rfloor}  |^2 \} )^{1/2}  
\le C(T) \left( 1+ | \boldsymbol{x} |^2 \right)^{1/2}  h^{p} 
\text{.}
\end{equation}
In other words, GLA is $O(h^p)$ strongly convergent to the exact splitting.
One can then use the triangle inequality to obtain the estimate in the theorem from 
\eqref{XZglobalmserror} and \eqref{YZglobalmserror}, i.e.,
\begin{align*}
& ( \E^{\boldsymbol{x}} \{ | \boldsymbol{X}_{\lfloor t/h \rfloor} - \boldsymbol{Y}(\lfloor t/h \rfloor h) |^2 \} )^{1/2} \le \\
& \qquad \underset{\le K(T) \left( 1+ | \boldsymbol{x} |^2 \right)^{1/2}  h^{p}  }{\underbrace{ 
( \E^{\boldsymbol{x}} \{ | \boldsymbol{X}_{\lfloor t/h \rfloor} - \boldsymbol{Z}_{\lfloor t/h \rfloor} |^2 \} )^{1/2} }}  + 
 \underset{ \le K(T) \left( 1+ | \boldsymbol{x} |^2 \right)^{1/2}  h }{\underbrace{ 
( \E^{\boldsymbol{x}} \{ | \boldsymbol{Z}_{\lfloor t/h \rfloor} - \boldsymbol{Y}(\lfloor t/h \rfloor h) |^2 \} )^{1/2} }} \text{.}
\end{align*}
In sum, GLA is first-order strongly convergent to solutions of \eqref{InertialLangevin}.
\end{proof}

\subsection{Geometric Ergodicity}

Geometric ergodicity is a strong type of stochastic stability of a Markov chain \cite{MeTw2009}.
In this section geometric ergodicity of GLA is established  following the recipe provided 
in \S 7 of \cite{MaStHi2002}.  In the context of this paper, geometric ergodicity means,

\begin{definition}
A Markov chain $\boldsymbol{X}_k$ is said to be geometrically ergodic if there exist probability
measure $\mu_{\infty}$, $ \rho < 1$, and $M \in C^{\infty}(\mathbb{R}^{2n}, \mathbb{R}^+)$, 
such that \begin{equation} \label{eq:geometricergodicity}
| \E^{\boldsymbol{x}} \left\{ f( \boldsymbol{X}_k ) \right\} - \mu_{\infty}(f) |
\le M(\boldsymbol{x}) \rho^k,~~\forall~\boldsymbol{x} \in \mathbb{R}^{2n},~~\forall~k\in\mathbb{N} \text{,}
\end{equation}
and for all  $f \in L^2_{\mu_{\infty}}(\mathbb{R}^{2n})$ satisfying $| f(\boldsymbol{y}) | \le M(\boldsymbol{y})$ for all 
$\boldsymbol{y} \in \mathbb{R}^{2n}$.
\end{definition}

Under the hypotheses below, the Lyapunov function from Assumption~\ref{sa3} is inherited by GLA.

\begin{thm22} [Geometric Ergodicity]
Assume \ref{sa1}, \ref{sa2}, and \ref{sa3}.   Then GLA is geometrically ergodic with respect 
to a discrete invariant measure $\mu_h$ and the continuous Lyapunov
function (cf.~Assumption~\ref{sa3}).  That is, there exist $h_c>0$,  $\lambda>0$ ,  and 
$C_3 > 0$, such that for all $h<h_c$ and for all $k \ge 2$, 
\[
| \E^{\boldsymbol{x}} \left\{ f( \boldsymbol{X}_k ) \right\} - \mu_h(f) | 
\le C_3 V(\boldsymbol{x}) e^{-\lambda k h}, ~~\forall~\boldsymbol{x} \in \mathbb{R}^{2n},
\]
and for all  test functions satisfying $| f(\boldsymbol{y}) | \le C_3 V(\boldsymbol{y})$
 for all $\boldsymbol{y} \in \mathbb{R}^{2n}$.
\end{thm22}

\begin{proof}
This proof is an application of Theorem 2.5 of \cite{MaStHi2002}.   To invoke this theorem, we 
will show that GLA  inherits the Lyapunov function $V:\mathbb{R}^{2n} \to \mathbb{R}$ 
of the continuous solution (cf.~Assumption~\ref{sa3}) and satisfies 
a minorization condition when sampled every other step.

To prove that GLA inherits the Lyapunov function  $V:\mathbb{R}^{2n} \to \mathbb{R}$ we 
use Theorem 7.2 of \cite{MaStHi2002}.  This theorem assumes that the Lyapunov function of the SDE
is essentially quadratic which follows from Assumption~\ref{sa3}, and 
that the discretization of the SDE satisfies Condition 7.1 of \cite{MaStHi2002}.  
Condition 7.1 (i) is a consequence of
a single-step mean-squared error estimate of GLA which can be derived from 
\eqref{YZmserror} and \eqref{XZmserror}.  Condition 7.1 (ii) is satisfied for the first 
and second moments of GLA due to the estimate \eqref{GLAmomentbound}.  
Hence all of the assumptions of Theorem 7.2 \cite{MaStHi2002} are satisfied, and one can conclude 
that GLA inherits the Lyapunov  function $V:\mathbb{R}^{2n} \to \mathbb{R}$ up to a 
constant pre-factor.

Next, we prove that GLA satisfies a minorization 
condition when sampled every other step.  This property follows from Lemma 2.3 of 
\cite{MaStHi2002}, because GLA sampled every other step admits a strictly positive, smooth transition 
probability function.  In fact, this transition probability $q_h: \mathbb{R}^{2n} \times \mathbb{R}^{2n} \to [0, 1]$ 
can be explicitly characterized, and by inspection it is clear that it is 
smooth as a function of its arguments and strictly positive everywhere.

To derive this expression, let  $o_h :\mathbb{R}^n \times \mathbb{R}^n \to [0,1]$ denote the 
transition probability  of the Ornstein-Uhlenbeck flow $\psi_{h}$ \eqref{psi}.
 By a change of variables,  it's transition density is given explicitly by:
\begin{align}  \label{oh}
& o_h  (\boldsymbol{p}_0, \boldsymbol{p}_1) = \nonumber \\    
& \frac{1}{( 2 \pi )^{n/2}  | \det( \boldsymbol{\Sigma}_h) | }  
\exp\left(-\frac{1}{2} \left( \boldsymbol{p}_1 - e^{- \gamma \boldsymbol{M}^{-1} h} \boldsymbol{p}_0 \right)^T  
\boldsymbol{\Sigma}_h^{-1} \left( \boldsymbol{p}_1 - e^{- \gamma \boldsymbol{M}^{-1} h} \boldsymbol{p}_0 \right) \right)  
\text{,}
\end{align}
where
\[
 \boldsymbol{\Sigma}_h = \beta^{-1} \left( \boldsymbol{Id}  - \exp(- 2 \gamma \boldsymbol{M}^{-1} h) \right) \boldsymbol{M}  \text{.}
\]
Let $\mathbf{D} = \mathbb{R}^{2n} \times \mathbb{R}^{2n}  \times \mathbb{R}^{2n}  \times \mathbb{R}^n$.   
Since the maps $\theta_h$ and $\psi_h$ enjoy the Markov property, the transition probability of the composition 
$ \theta_h \circ \psi_h \circ \theta_h \circ \psi_h$ can be expressed as a product of 
the transition probabilities of its components:
\begin{align*}
&q_{h}  \left( (\boldsymbol{q},\boldsymbol{p}),  (\bar{\boldsymbol{q}},\bar{\boldsymbol{p}})  \right) =  \\
& \int_{\mathbf{D}}  o_h(\boldsymbol{p}, \boldsymbol{p}_1)  
\delta((\boldsymbol{q}_1, \boldsymbol{p}_2)-\theta_h(\boldsymbol{q},\boldsymbol{p}_1)) 
o_h (\boldsymbol{p}_2 , \boldsymbol{p}_3)  
\delta((\bar{\boldsymbol{q}}, \bar{\boldsymbol{p}})-\theta_h(\boldsymbol{q}_1,\boldsymbol{p}_3)) 
d \boldsymbol{p}_1 d \boldsymbol{p}_2 d \boldsymbol{p}_3 d \boldsymbol{q}_1 \text{.}
\end{align*}
The zero of the argument of the second Dirac-delta measure (from left) occurs 
at $(\boldsymbol{q}_1,\boldsymbol{p}_3) = \theta_h^{-1}(\bar{\boldsymbol{q}}, \bar{\boldsymbol{p}})$.  
Hence, the above expression simplifies,
\begin{align*}
& q_{h}  \left( (\boldsymbol{q},\boldsymbol{p}),  (\bar{\boldsymbol{q}},\bar{\boldsymbol{p}})  \right) =\\
& \int_{\mathbb{R}^{2n}  \times \mathbb{R}^{2n} }  o_h(\boldsymbol{p}, \boldsymbol{p}_1)  
\delta((\boldsymbol{q}_1, \boldsymbol{p}_2)-\theta_h(\boldsymbol{q},\boldsymbol{p}_1)) 
o_h (\boldsymbol{p}_2 , \boldsymbol{p}_3)  
d \boldsymbol{p}_1 d \boldsymbol{p}_2  \text{.}
\end{align*}
By condition {\bf V1} on $\theta_h$, the zero of the argument of the remaining Dirac-delta measure 
above is uniquely determined by the discrete Hamiltonian flow of
the discrete Lagrangian (cf.~\eqref{del} in Appendix I).  Hence, one obtains:
\begin{align} \label{qh}
 & q_{h}  \left( (\boldsymbol{q},\boldsymbol{p}),  (\bar{\boldsymbol{q}},\bar{\boldsymbol{p}})  \right)  =\nonumber \\ 
 & \quad | \det(D_{12} L_d(\boldsymbol{q}, \boldsymbol{q}_1,h) )|  
 o_h(\boldsymbol{p},  -D_1 L_d(\boldsymbol{q}, \boldsymbol{q}_1, h))  
 o_h( D_2 L_d(\boldsymbol{q}, \boldsymbol{q}_1, h), \boldsymbol{p}_3)   
\end{align}
where $(\boldsymbol{q}_1,\boldsymbol{p}_3) = \theta_h^{-1}(\bar{\boldsymbol{q}}, \bar{\boldsymbol{p}})$.  
Using the hyperregularity assumption on the variational integrator {\bf V1} (cf.~\eqref{HyperRegularity}),
 \eqref{oh}, and \eqref{qh}, it is clear that $q_h$ is a smooth probability transition function that 
is everywhere strictly positive.   Hence, by Lemma 2.3 of \cite{MaStHi2002}, GLA sampled every other step
satisfies a minorization condition.

In sum, we have shown that GLA satisfies a minorization 
condition and admits a Lyapunov function.  
The result follows from invoking Theorem 2.5 in \cite{MaStHi2002}.
\end{proof}

\subsection{Long-Run Accuracy}

Now we quantify the accuracy of GLA in sampling from the equilibrium measure of
\eqref{InertialLangevin}.  For this purpose recall the following definition.

\begin{definition}[Invariance of Measure]
A Markov chain $\boldsymbol{X}_k \in \mathbb{R}^{2n}$ is said to preserve a probability measure $\mu_{\infty}$ 
if for all $f \in L^2_{\mu_{\infty}}(\mathbb{R}^{2n})$ and $k \in \mathbb{N}$,
\begin{equation}  \label{eq:muinvariancecondition}
\E_{\mu_{\infty}} \E^{\boldsymbol{x}} \{ f( \boldsymbol{X}_k ) \} = \mu_{\infty}(f) 
\end{equation}
where $\mu_{\infty}(f) =   \int_{\mathbb{R}^{2n}} f    d \mu_{\infty}$ and $\E_{\mu_{\infty}} \E^{\boldsymbol{x}}$ denotes
expectation conditioned on the initial distribution being 
sampled from $\mu_{\infty}$, i.e.,
\[
\mathbb{E}_{\mu_{\infty}} \mathbb{E}^{\boldsymbol{x}} \left\{ f( \boldsymbol{X}_k ) \right\} = 
\int_{\mathbb{R}^{2n}} \mathbb{E}^{\boldsymbol{x}} \left\{ f( \boldsymbol{X}_k ) \right\}  
\mu_{\infty} (d \boldsymbol{x}) \text{.}
\]
\end{definition}

Given a step-size $h$, define the deviation GLA makes in preserving the Boltzmann-Gibbs measure, $\mu$, 
as $\Delta_h^k: L^2_{\mu}(\mathbb{R}^{2n}) \to \mathbb{R}$:
\[
\Delta_h^k(f) :=   \E_{\mu} \E^{\boldsymbol{x}} \{ f( \boldsymbol{X}_k ) \} -  \mu(f)
\text{.}
\]
Observe that if GLA exactly preserves $\mu$ then:
\[
\Delta_h^k(f) = 0, ~~~ \forall ~f \in L^2_{\mu}(\mathbb{R}^{2n}) \text{.}
\]
The following local error result follows from
the Ornstein-Uhlenbeck flow $\psi_h$ preserving $\mu$ and 
the variational integrator $\theta_h$ preserving Lebesgue measure.

\begin{lemma} \label{Deltah1}
Suppose the potential energy satisfies {\bf U2}.   For a given $f \in L^2_{\mu}(\mathbb{R}^{2n})$,  
\[
\Delta_h^1(f) = \int_{\mathbb{R}^{2n}} f(\boldsymbol{x}) 
\left( e^{-\beta \left(  H((\theta_h)^{-1} (\boldsymbol{x})) - H(\boldsymbol{x}) \right)} - 1  \right) 
\mu(d \boldsymbol{x}) \text{.}
\]
\end{lemma}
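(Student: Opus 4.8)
The plan is to unfold the definition of $\Delta_h^1$, to exploit that the two pieces of a single GLA step sit compatibly with $\mu$ — the Ornstein--Uhlenbeck half-step $\psi_h$ leaves $\mu$ invariant, while the variational half-step $\theta_h$ is a volume-preserving diffeomorphism — and to finish with one change of variables.

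First I would use $\boldsymbol{X}_1 = \theta_h \circ \psi_{t_0+h,t_0}(\boldsymbol{x})$ from \eqref{GLA}, together with the fact that $\theta_h$ is deterministic, to write
\[
\Delta_h^1(f) = \int_{\mathbb{R}^{2n}} \E^{\boldsymbol{x}}\bigl\{ (f\circ\theta_h)\bigl(\psi_h(\boldsymbol{x})\bigr) \bigr\}\, \mu(d\boldsymbol{x}) \;-\; \mu(f).
\]
The one genuinely substantive ingredient is that $\psi_h$ preserves $\mu$. Since $\psi_h$ fixes the configuration coordinate $\boldsymbol{q}$ and acts on the momentum as the time-$h$ transition kernel of the linear SDE \eqref{OrnsteinUhlenbeck}, and since $H$ is separable with the $\boldsymbol{p}$-marginal of $\pi$ in \eqref{BGdistribution} equal to $\mathcal{N}(\boldsymbol{0},\beta^{-1}\boldsymbol{M})$, it suffices to check that this Gaussian is stationary for the momentum kernel. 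Propagating $\mathcal{N}(\boldsymbol{0},\beta^{-1}\boldsymbol{M})$ through the map $\boldsymbol{p}\mapsto e^{-\gamma\boldsymbol{M}^{-1}h}\boldsymbol{p}+\boldsymbol{A}_h\boldsymbol{\xi}$ of \eqref{psi} produces a centered Gaussian with covariance $e^{-2\gamma\boldsymbol{M}^{-1}h}\beta^{-1}\boldsymbol{M}+\boldsymbol{\Sigma}_h$, which collapses to $\beta^{-1}\boldsymbol{M}$ precisely because $\boldsymbol{\Sigma}_h = \beta^{-1}(\boldsymbol{I}-e^{-2\gamma\boldsymbol{M}^{-1}h})\boldsymbol{M}$, as computed in the construction preceding \eqref{psi}. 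Applying the invariance of $\mu$ under $\psi_h$ to $g = f\circ\theta_h$ then gives
\[
\int_{\mathbb{R}^{2n}} \E^{\boldsymbol{x}}\bigl\{ (f\circ\theta_h)(\psi_h(\boldsymbol{x})) \bigr\}\, \mu(d\boldsymbol{x}) = \int_{\mathbb{R}^{2n}} f\bigl(\theta_h(\boldsymbol{x})\bigr)\, Z^{-1} e^{-\beta H(\boldsymbol{x})}\, d\boldsymbol{x}.
\]

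Next I would change variables $\boldsymbol{y}=\theta_h(\boldsymbol{x})$. By condition \textbf{V1} the integrator $\theta_h$ is symplectic, hence Lebesgue-measure preserving (cf.\ Appendix I and the discussion following \textbf{V1}), so its Jacobian is identically $1$, $d\boldsymbol{x}=d\boldsymbol{y}$, and $\boldsymbol{x}=(\theta_h)^{-1}(\boldsymbol{y})$. Factoring $e^{-\beta H((\theta_h)^{-1}(\boldsymbol{y}))} = e^{-\beta\left(H((\theta_h)^{-1}(\boldsymbol{y}))-H(\boldsymbol{y})\right)}\, e^{-\beta H(\boldsymbol{y})}$ turns the right-hand side into $\int_{\mathbb{R}^{2n}} f(\boldsymbol{y})\, e^{-\beta\left(H((\theta_h)^{-1}(\boldsymbol{y}))-H(\boldsymbol{y})\right)}\,\mu(d\boldsymbol{y})$, and subtracting $\mu(f)=\int f\, d\mu$ produces exactly the claimed formula. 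Hypothesis \textbf{U2} enters only to guarantee $Z<\infty$, so that $\mu$ is a genuine probability measure.

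Apart from the invariance of $\mu$ under $\psi_h$ the argument is pure bookkeeping, so I do not expect a real obstacle; the one point worth a sentence is integrability of $f\circ\theta_h$ against $\mu$, which does not follow from $f\in L^2_{\mu}$ alone since $\theta_h$ is not $\mu$-preserving. I would handle this by establishing the identity first for bounded, compactly supported $f$ — where every integral is manifestly finite and the change of variables is unproblematic — and then extending to $L^2_{\mu}$ by a density argument, noting that the right-hand side is a continuous linear functional of $f$. The only thing to keep track of in the computation is the orientation of the change of variables, so that the energy difference appears as $H\circ(\theta_h)^{-1}-H$ and not $H\circ\theta_h-H$.
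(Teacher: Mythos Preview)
Your proposal is correct and follows essentially the same route as the paper: write $\boldsymbol{X}_1=\theta_h\circ\psi_h(\boldsymbol{x})$, use that $\psi_h$ preserves $\mu$ and $\theta_h$ is deterministic to reduce to $\int f(\theta_h(\boldsymbol{x}))\,\mu(d\boldsymbol{x})-\mu(f)$, then change variables via the volume-preserving map $\theta_h$. Your added verification of the Gaussian stationarity and the density remark are more than the paper supplies, but the skeleton of the argument is identical.
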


\begin{proof}
The condition {\bf U2} ensures that $\mu$ is a well-defined probability measure.
According to the definition of GLA \eqref{GLA}, $\boldsymbol{X}_1=  \theta_h \circ \psi_h(\boldsymbol{x}) $. 
Substitute this expression into $\Delta_h^1$ to obtain:
\[ 
\Delta_h^1(f)  = 
\int_{\mathbb{R}^{2n}} \E^{\boldsymbol{x}} \left\{ f (\theta_h \circ \psi_h(\boldsymbol{x}) ) \right\} \mu( d \boldsymbol{x} )  
-   \int_{\mathbb{R}^{2n}} f    d \mu \text{.}
\]
Since $\psi_h$ preserves $\mu$ and $\theta_h$ is deterministic it follows that,
\[ 
\Delta_h^1(f)  = 
\int_{\mathbb{R}^{2n}}  f (\theta_h(\boldsymbol{x}) )  \mu(d \boldsymbol{x} )  -   \int_{\mathbb{R}^{2n}} f    d \mu \text{.}
\]
Changing variables under the map $\theta_h$ in the first integral above, and using the 
volume-preserving property of the variational integrator $\theta_h$ (See Appendix.) 
one obtains the desired expression.
\end{proof}

\begin{Remark}
As a consequence of Lemma~\ref{Deltah1}, if $\theta_h$ admits no energy error, then
GLA preserves $\mu$.  In particular, the exact splitting \eqref{ExactSplitting} preserves $\mu$.
\end{Remark}

In the situation where GLA is geometrically ergodic, this paragraph quantifies 
the equilibrium error of GLA in preserving the BG measure.

\begin{lemma} \label{Deltahinfty} 
Assume \ref{sa1}, \ref{sa2}, and \ref{sa3}.  Then, there exist $C>0$ and $h_c>0$, such that
for all $h<h_c$, 
\[
\lim_{N \to \infty} \left|  \Delta_h^N( f ) \right|  \le  C  h^{p} ,
\]
and for all  $f \in L^2_{\mu_{\infty}}(\mathbb{R}^{2n})$ satisfying $| f(\boldsymbol{y}) | \le C_3 V(\boldsymbol{y})$ for all 
$\boldsymbol{y} \in \mathbb{R}^{2n}$.
\end{lemma}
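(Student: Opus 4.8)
The plan is to turn the long-run deviation into an absolutely convergent sum of one-step deviations that decay geometrically, each of which is $O(h^{p+1})$ by Lemma~\ref{Deltah1}. Write $P_h$ for the Markov transition operator of GLA, so that $\E^{\boldsymbol{x}}\{f(\boldsymbol{X}_k)\} = (P_h^k f)(\boldsymbol{x})$, $\Delta_h^k(f) = \mu(P_h^k f) - \mu(f)$, and $\Delta_h^1(g) = \mu(P_h g) - \mu(g)$ for any $g\in L^2_\mu(\R^{2n})$. The first step is the telescoping identity
\[
\Delta_h^N(f) = \sum_{j=0}^{N-1}\bigl(\mu(P_h^{j+1}f) - \mu(P_h^j f)\bigr) = \sum_{j=0}^{N-1}\Delta_h^1(P_h^j f),
\]
so it suffices to bound $\sum_{j\ge0}|\Delta_h^1(P_h^j f)|$ by $Ch^p$ uniformly in $N$; the series then converges, and its sum, which is $\lim_{N\to\infty}\Delta_h^N(f)=\mu_h(f)-\mu(f)$, obeys the same bound.

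The second step isolates the decaying part of $P_h^j f$. Since $P_h\mathbf{1} = \mathbf{1}$ we have $\Delta_h^1(\mathbf{1}) = \mu(\mathbf{1}) - \mu(\mathbf{1}) = 0$, hence by linearity of $\Delta_h^1$,
\[
\Delta_h^1(P_h^j f) = \Delta_h^1\bigl(P_h^j f - \mu_h(f)\,\mathbf{1}\bigr),
\]
where $\mu_h$ is the invariant measure of GLA furnished by Theorem~\ref{GLAgeometricergodicity}. That theorem gives $|P_h^j f(\boldsymbol{x}) - \mu_h(f)| \le C_3 V(\boldsymbol{x})\,e^{-\lambda j h}$ for $j\ge2$ whenever $|f|\le C_3 V$, while for $j=0,1$ the discrete Lyapunov estimate inherited by GLA (established inside the proof of Theorem~\ref{GLAgeometricergodicity}) yields $|P_h^j f(\boldsymbol{x})| \le C_3\,\E^{\boldsymbol{x}}\{V(\boldsymbol{X}_j)\} \le C\,(1+|\boldsymbol{x}|^2)$, uniformly for $h<h_c$.

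The third step is a uniform one-step bound. By Lemma~\ref{Deltah1},
\[
\Delta_h^1(g) = \int_{\R^{2n}} g(\boldsymbol{x})\,\bigl(e^{-\beta(H(\theta_h^{-1}(\boldsymbol{x})) - H(\boldsymbol{x}))} - 1\bigr)\,\mu(d\boldsymbol{x}).
\]
Because $\vartheta_{-h}$ preserves $H$, condition {\bf V2} (applied to $\theta_h^{-1}$, which lies within $O((1+|\boldsymbol{x}|^2)^{1/2}h^{p+1})$ of $\vartheta_{-h}$) together with the at-most-linear growth of $\nabla H$ coming from {\bf U1} gives $|H(\theta_h^{-1}\boldsymbol{x}) - H(\boldsymbol{x})| = |H(\theta_h^{-1}\boldsymbol{x}) - H(\vartheta_{-h}\boldsymbol{x})| \le C(1+|\boldsymbol{x}|^2)h^{p+1}$, and consequently $|e^{-\beta(\cdots)} - 1| \le C(1+|\boldsymbol{x}|^2)h^{p+1}\exp\!\bigl(C(1+|\boldsymbol{x}|^2)h^{p+1}\bigr)$. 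Hence for $|g|\le a\,V$,
\[
|\Delta_h^1(g)| \le C\,a\,h^{p+1}\int_{\R^{2n}}(1+|\boldsymbol{x}|^2)^2\exp\!\bigl(C(1+|\boldsymbol{x}|^2)h^{p+1}\bigr)\,\mu(d\boldsymbol{x}).
\]
By {\bf U2}, $e^{-\beta H}$ decays at least like $e^{-c|\boldsymbol{x}|^2}$, so shrinking $h_c$ if necessary the exponential weight is absorbed and the integral is bounded by a constant independent of $h<h_c$. Applying this with $a = C\,C_3$ for $j=0,1$ and $a = C_3 e^{-\lambda j h}$ for $j\ge2$ gives $|\Delta_h^1(P_h^j f)|\le Ch^{p+1}$ for $j\le1$ and $|\Delta_h^1(P_h^j f)|\le Ch^{p+1}e^{-\lambda j h}$ for $j\ge2$, whence
\[
\sum_{j\ge0}|\Delta_h^1(P_h^j f)| \le Ch^{p+1}\Bigl(2 + \sum_{j\ge2}e^{-\lambda j h}\Bigr) \le Ch^{p+1}\Bigl(2 + \frac{1}{1-e^{-\lambda h}}\Bigr) \le C'h^p
\]
for $h<h_c$, using $1-e^{-\lambda h}\ge\lambda h/2$ for small $h$. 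This is uniform in $N$, which proves the lemma.

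The step I expect to be the main obstacle is the third: controlling the factor $\exp(C(1+|\boldsymbol{x}|^2)h^{p+1})$ that appears when $e^{-\beta\Delta H}$ is linearized. Its harmlessness rests entirely on the super-Gaussian tail of $\mu$ guaranteed by {\bf U2} (and the Gaussian tail in momentum), and it forces $h_c$ to be small enough that $Ch^{p+1}$ stays below the decay rate of $e^{-\beta H}$ against the polynomial weight $(1+|\boldsymbol{x}|^2)^2$. A lesser technical point is the uniform-in-$h$ control of the discrete Lyapunov constants entering the $j=0,1$ terms, which is already available from the essentially quadratic Lyapunov structure used in Theorem~\ref{GLAgeometricergodicity}.
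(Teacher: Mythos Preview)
Your proof is correct and follows essentially the same route as the paper's: telescope $\Delta_h^N(f)$ into one-step deviations, apply Lemma~\ref{Deltah1} to each, subtract the constant $\mu_h(f)$ (which integrates to zero against the density difference), bound the deviation from equilibrium via geometric ergodicity, bound the energy error via {\bf V2} and {\bf U1}, sum the geometric series, and absorb the exponential weight using {\bf U2}. The only cosmetic differences are that the paper first changes variables under $\theta_h$ so that {\bf V2} applies directly to $H(\theta_h(\boldsymbol{x}))-H(\boldsymbol{x})$ rather than to $\theta_h^{-1}$, and that you treat the $j=0,1$ terms more carefully than the paper does.
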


\begin{proof}
Let $f \in L^2_{\mu}(\mathbb{R}^{2n})$ such that $| f(\boldsymbol{y}) | \le C_3 V(\boldsymbol{y})$
for all $\boldsymbol{y} \in \mathbb{R}^{2n}$.  The term $\E_{\mu} \E^{\boldsymbol{x}} \{ f( \boldsymbol{X}_N ) \} $ 
can be written as a telescoping sum:
\begin{align*}
 \E_{\mu} \E^{\boldsymbol{x}} \{ f( \boldsymbol{X}_N ) \} =  \mu(f) +   
 \sum_{k=1}^{N} \left( \E_{\mu}  \E^{\boldsymbol{x}} \{ f( \boldsymbol{X}_k ) \}  -    
 \E_{\mu} \E^{\boldsymbol{x}} \{ f( \boldsymbol{X}_{k-1} ) \}  \right) \text{.}
\end{align*}
By Lemma~\ref{Deltah1}, one can rewrite/reindex this sum as:
\begin{equation}
\Delta_h^N( f ) =  \int_{\mathbb{R}^{2n}}\sum_{k=0}^{N-1} \E^{\boldsymbol{x}} \left\{ f( \boldsymbol{X}_k ) \right\} \left( e^{ - \beta 
(H ( \theta_h^{-1}(\boldsymbol{x})) - H(\boldsymbol{x}) ) } - 1 \right)   \mu(d \boldsymbol{x}) \text{.}
\end{equation}
Since $\theta_h$ preserves Lebesgue measure, one can write this deviation as:
\begin{align} \label{DeltahN}
\Delta_h^N & ( f ) = \nonumber \\
& \int_{\mathbb{R}^{2n}}  \sum_{k=0}^{N-1}  \underset{\text{Deviation from Equilibrium}}{\underbrace{
\left( \E^{\boldsymbol{x}} \left\{ f( \boldsymbol{X}_k ) \right\}  - \mu_h(f) \right)  }} \cdot
\underset{\text{Energy Error of Variational Integrator}}{\underbrace{
\left( e^{ - \beta (H ( \theta_h^{-1}(\boldsymbol{x}) ) - H(\boldsymbol{x}) ) } - 1 \right)  }}   \mu(d \boldsymbol{x}) \text{.} 
\end{align}
From \eqref{DeltahN} it is clear that the equilibrium BG error is due to: 1) how fast GLA
converges to equilibrium and 2) the local accuracy with which $\theta_h$ represents the 
Hamiltonian function $H$.  The equality \eqref{DeltahN} is the crux of the proof, and 
what follows is an approach to bound $\Delta_h^N(f)$.

Since GLA is geometrically ergodic (cf.~Theorem~\ref{GLAgeometricergodicity}), one can 
bound $\Delta_h^N(f)$ from above by
\begin{equation*} 
\left|  \Delta_h^N( f ) \right|   \le    \left(  \sum_{k=0}^{N-1}   e^{-\lambda h k}  \right) C_3  \int_{\mathbb{R}^{2n}}  V( \boldsymbol{x}  )
\left|  e^{ - \beta (H ( \theta_h^{-1}(\boldsymbol{x}) ) - H(\boldsymbol{x}) ) } - 1 \right| \mu(d \boldsymbol{x})  \text{.}
\end{equation*}
Changing variables in the right-hand-side under the map $\theta_h$, one can rewrite this bound as,
\begin{equation*} 
\left|  \Delta_h^N( f ) \right|   \le    \left(  \sum_{k=0}^{N-1}   e^{-\lambda h k} \right) C_3  \int_{\mathbb{R}^{2n}}  V( \theta_h(\boldsymbol{x})  ) 
\left|  e^{ - \beta (H ( \theta_h(\boldsymbol{x}) )- H(\boldsymbol{x}) ) } - 1 \right| \mu(d \boldsymbol{x})  \text{.}
\end{equation*}
In the limit as $N \to \infty$, the right-hand-side of the above can be written in terms of the 
formula for the geometric series for $e^{-\lambda h}$:
\begin{equation} 
\lim_{N \to \infty} \left|  \Delta_h^N( f ) \right|   \le    \frac{C_3}{1 - e^{-\lambda h}}  \int_{\mathbb{R}^{2n}} V( \theta_h(\boldsymbol{x})  )  
\left|  e^{ - \beta (H ( \theta_h(\boldsymbol{x}) ) - H(\boldsymbol{x}) ) } - 1 \right| \mu(d \boldsymbol{x})  \text{.}
\end{equation}
Using the natural bound $| e^{x} - 1 | \le e^{| x |} - 1$ for all $x \in \mathbb{R}$, one can further 
bound $| \Delta_h^N(f) |$ by:
\begin{equation}  \label{DeltahNieq}
\lim_{N \to \infty} \left|  \Delta_h^N( f ) \right|   \le    \frac{C_3}{1 - e^{-\lambda h}} \int_{\mathbb{R}^{2n}} V( \theta_h(\boldsymbol{x})  ) 
\left( e^{ \beta | H ( \theta_h(\boldsymbol{x}) ) - H(\boldsymbol{x}) | } - 1 \right) \mu(d \boldsymbol{x})  \text{.}
 \end{equation}
 Introduce the exact flow $\vartheta_h$ of Hamilton's equations \eqref{HamiltonsEquations} into this bound,
 \begin{equation}  \label{DeltahNieq}
\lim_{N \to \infty} \left|  \Delta_h^N( f ) \right|   \le    \frac{C_3}{1 - e^{-\lambda h}} \int_{\mathbb{R}^{2n}}  V( \theta_h(\boldsymbol{x})  ) 
\left( e^{ \beta | H ( \theta_h(\boldsymbol{x}) )- H(\vartheta_h(\boldsymbol{x})) | } - 1 \right) \mu(d \boldsymbol{x})  \text{.}
 \end{equation}

Set $\boldsymbol{y}_0 = \theta_h(\boldsymbol{x})$ and $\boldsymbol{y}_1 = \vartheta_h(\boldsymbol{x})$.  
By the fundamental theorem of calculus,
\begin{equation*}
 H(\boldsymbol{y}_1) - H(\boldsymbol{y}_0) = 
 \int_0^1 \nabla H(\boldsymbol{y}_0 + s (\boldsymbol{y}_1 - \boldsymbol{y}_0)) 
 \cdot (\boldsymbol{y}_1 - \boldsymbol{y}_0) ds  \text{.}
\end{equation*}
Using condition {\bf U1} and the Cauchy-Schwartz inequality, it follows from the above that 
there exists $C > 0$ such that
\[
 |  H(\boldsymbol{y}_1) - H(\boldsymbol{y}_0) | \le 
 C (1+ | \boldsymbol{y}_1 | + | \boldsymbol{y}_0 | ) |  \boldsymbol{y}_1 - \boldsymbol{y}_0 |  \text{.}
 \]
 Another application of the condition {\bf U1} and {\bf V2} implies 
 there exists $C > 0$ such that
 \[
 |  H(\boldsymbol{y}_1) - H(\boldsymbol{y}_0) | \le 
 C (1+ | \boldsymbol{x} |^2 ) h^{p+1} \text{.}
 \]
 Therefore,
  \begin{equation}  \label{DeltahNieq}
\lim_{N \to \infty} \left|  \Delta_h^N( f ) \right|   \le    \frac{C_3}{1 - e^{-\lambda h}} \int_{\mathbb{R}^{2n}} V( \theta_h(\boldsymbol{x})  ) 
\left( e^{ \beta K (1+ | \boldsymbol{x} |^2 ) h^{p+1} } - 1 \right) \mu(d \boldsymbol{x})  \text{.}
 \end{equation}
 Now we show how the the factor $V( \theta_h(\boldsymbol{x})  ) $ above is handled.

 Since the Lyapunov function is quadratically bounded, the variational integrator satisfies {\bf V2}, and the
 Hamiltonian vector field is uniformly Lipschitz by condition {\bf U1}, there exists $C > 0$ such that
   \begin{equation} 
\lim_{N \to \infty} \left|  \Delta_h^N( f ) \right|   \le    \frac{C}{1 - e^{-\lambda h}} \int_{\mathbb{R}^{2n}}  (1 + |\boldsymbol{x} |^2  ) 
\left( e^{ \beta K (1+ | \boldsymbol{x} |^2 ) h^{p+1} } - 1 \right) \mu(d \boldsymbol{x})  \text{.}
 \end{equation}
By condition {\bf U2} the total energy is quadratically bounded from below. Consequently one can 
bound $e^{- \beta H(\boldsymbol{x})}$ by $e^{- \beta D (1 + | \boldsymbol{x} |^2 ) }$
for some constant $D > 0$.   Thus,
   \begin{align*} 
& \lim_{N \to \infty} \left|  \Delta_h^N( f ) \right|   \le \\
 & \qquad   \frac{C}{1 - e^{-\lambda h}} \int_{\mathbb{R}^{2n}}  (1 + |\boldsymbol{x} |^2  ) 
\left( e^{ \beta K (1+ | \boldsymbol{x} |^2 ) h^{p+1} } - 1 \right) e^{- \beta D (1 + | \boldsymbol{x} |^2 ) } d \boldsymbol{x}  \text{.}
 \end{align*}
When $h < h_c= (D/K)^{1/(p+1)}$ the above integral is finite and one obtains the desired error estimate.
 \end{proof}

A simple application of  Theorem~\ref{GLABGaccuracy} implies an error estimate for $\mu_h$.     
For this purpose we introduce the total variation between measures $\mu$ and $\nu$:
\[
| \mu - \nu |_{TV}  =  \sup_{| f | \le 1} \left| \int_{\mathbb{R}^{2n}}
f(\boldsymbol{x}) (\mu(d \boldsymbol{x}) -  \nu(d \boldsymbol{x})) \right| \text{.}
\]
Since $\tilde{M}( \boldsymbol{y}) \ge 1$ for all $ \boldsymbol{y} \in \mathbb{R}^{2n}$,
Theorem~\ref{GLABGaccuracy} applies for all
$f \in L^2_{\mu}(\mathbb{R}^{2n})$ such that  $| f(\boldsymbol{y}) | \le 1$ for all $\boldsymbol{y} \in \mathbb{R}^{2n}$.  
The TV norm can be written as:
\begin{align*}
&| \mu - \mu_h |_{TV} =   \\
& \qquad  \sup_{| f | \le 1} \left|  \int_{\mathbb{R}^{2n}} f d \mu - \E_{\mu} \E^{\boldsymbol{x}} \{ f( \boldsymbol{X}_N ) \}  
+ \E_{\mu} \E^{\boldsymbol{x}} \{ f( \boldsymbol{X}_N ) \}  - \int_{\mathbb{R}^{2n}} f d\mu_h \right| 
\end{align*}
By the triangle inequality,
\begin{align}
| \mu - \mu_h |_{TV} \le  \sup_{| f | \le 1} \left| \Delta_h^N(f) \right| 
+ \sup_{| f | \le 1} \left| \E_{\mu} \E^{\boldsymbol{x}} \{ f( \boldsymbol{X}_N ) \} -\mu_h(f) \right| \text{.} \label{eq:tvprelimit}
\end{align}
However, under the hypotheses of the theorem, $GLA$ is geometrically ergodic 
with respect to $\mu_h$ and hence,
\begin{equation} \label{eq:decays}
\lim_{N \to \infty} \sup_{| f | \le 1}  
\left| \E_{\mu} \E^{\boldsymbol{x}} \{ f( \boldsymbol{X}_N ) \} -\mu_h(f) \right| \to 0 
\end{equation}
and,
\begin{align} \label{tvestimate}
| \mu - \mu_h |_{TV} \le  \lim_{N \to \infty} \sup_{| f | \le 1} \left| \Delta_h^N(f) \right|  \text{.}
\end{align}
Lemma~\ref{Deltahinfty} can now be invoked to obtain from \eqref{tvestimate} an 
upper bound for the TV distance between $\mu$ and $\mu_h$.
This concludes the proof of Theorem~\ref{GLABGaccuracy} which we restate:

\begin{thm23}[Long-Run Accuracy]
Assume \ref{sa1}, \ref{sa2}, and \ref{sa3}.    Let $\mu_h$ denote the discrete invariant measure of GLA. 
Then, there exist $C>0$ and $h_c>0$, such that for all $h<h_c$, 
\[
  | \mu - \mu_h |_{TV} \le C h^{p}  \text{.}
\]
\end{thm23}

In summary, the preceding analysis showed the TV error estimate in Theorem~\ref{GLABGaccuracy} 
relies on GLA's variational integrator $\theta_h$ being volume-preserving and $pth$-order accurate, 
the Ornstein-Uhlenbeck map $\psi_h$ exactly preserving the Boltzmann-Gibbs measure, 
and GLA being geometrically ergodic.   To establish the latter, we used the strategy adopted in 
\cite{MaStHi2002} which relates pathwise convergence of a discretization of an SDE to 
geometric ergodicity of the discretization.  This strategy requires the potential force is uniformly 
Lipschitz.


\section{Validation} \label{Validation}

This section tests three different instances of GLA on a variety of simple
mechanical systems governed by Langevin equations.  The purpose of this section is to 
confirm the error estimates provided in the paper.

Let $h$ be a fixed step size and $\xi_k \sim \mathcal{N}(0,1)$ for $k \in \mathbb{N}$.
The following update scheme is obtained by composing the explicit first-order, symplectic Euler 
method with $\psi_h$:
\begin{equation} \label{eq:bgse}
\begin{cases}
\begin{array}{rcl}
\hat{p}_k &=& e^{-\gamma h} p_k +  \sqrt{\frac{1-e^{-2 \gamma h}}{\beta}} \xi_k \text{,} \\
q_{k+1} &=& q_k + h  \hat{p}_k \text{,} \\
p_{k+1} &=&  \hat{p}_k - h  \frac{\partial U}{\partial q}(q_{k+1}) \text{,}
\end{array}
\end{cases}
\end{equation}
for $k \in \mathbb{N}$. The following integrator is obtained by composing the second-order 
accurate explicit, symmetric, symplectic St\"{o}rmer-Verlet
method  with $\psi_h$:
\begin{equation} \label{eq:bgsv}
\begin{cases}
\begin{array}{rcl}
\hat{p}_k &=& e^{-\gamma h} p_k +  \sqrt{\frac{1-e^{-2 \gamma h}}{\beta}} \xi_k \text{,} \\
P_k^{1/2} &=& \hat{p}_k - \frac{h}{2} \frac{\partial U}{\partial q}(q_k)  \text{,} \\
q_{k+1} &=& q_k + h  P_k^{1/2}   \text{,} \\
p_{k+1} &=& P_k^{1/2} - \frac{h}{2}  \frac{\partial U}{\partial q}(q_{k+1})   \text{,}
\end{array}
\end{cases}
\end{equation}
for $k \in \mathbb{N}$.  The following integrator is obtained by composing a fourth-order 
accurate explicit, symmetric, symplectic method due
to~F.~Neri (see, e.g., \cite{Yo1990}) with $\psi_h$:
\begin{equation} \label{eq:bgne}
\begin{cases}
\begin{array}{rcl}
Q_1 &=& q_k \text{,} \\
P_1 &=& e^{-\gamma h} p_k +  \sqrt{\frac{1-e^{-2 \gamma h}}{\beta}} \xi_k \text{,}  \\
\end{array}  \\
\begin{cases}
 \begin{array}{rcl}
P_{i+1} &=& P_i - c_i h \frac{\partial U}{\partial q}(Q_i)  \text{,}  \\
Q_{i+1} &=& Q_i + d_i h P_{i+1} \text{,}
\end{array}~~~   i =1,..., 4, 
\end{cases} \\
\begin{array}{rcl}
q_{k+1} &=& Q_{5} \text{,} \\
p_{k+1} &=& P_{5}   \text{,}
\end{array}
\end{cases}
\end{equation}
for $k \in \mathbb{N}$, and where we have introduced the following constants: 
\begin{align*}
c_1 = c_4 = \frac{1}{2 (2 -2^{1/3})}, &~~~ c_2 = c_3 = \frac{1- 2^{1/3}}{2 ( 2- 2^{1/3})}, ~~\\
d_1 = d_3 = \frac{1}{2 -2^{1/3}}, &~~~ d_2 = \frac{-2^{1/3}}{2- 2^{1/3}}, d_4 = 0 \text{.}
\end{align*}
The purpose of this fourth-order symplectic integrator is for validation.  For ``optimal'' fourth and 
fifth-order accurate symplectic integrators 
that minimize the error in the Hamiltonian, the reader is referred to \cite{McAt1992}.

We will show that despite the fact that (\ref{eq:bgsv}) and (\ref{eq:bgne}) are only first-order 
pathwise convergent according to Theorem~\ref{GLAaccuracy}, they approximate ensemble averages of 
$\mu$-integrable functions that satisfy $| f(q,p) | \le M(q,p)$ for all $(q,p) \in \mathbb{R}^{2n}$ to within second and 
fourth-order accuracy, respectively.     This is consistent with Theorem~\ref{GLABGaccuracy}.

\paragraph{Linear Oscillator}

This section follows the analysis of numerical methods for linear oscillators governed by 
Langevin equations developed in \cite{MiTr2004, BuLeLy2007}.
The governing equations for a linear oscillator of unit mass at uniform temperature $1/\beta$ 
are given explicitly by evaluating \eqref{InertialLangevin} at $U(q) = q^2/2$:
\begin{equation}
\begin{cases}
\begin{array}{rcl}
d q &=& p dt  \text{,}  \\
d p &= & -q  dt - \gamma p dt + \sqrt{2 \beta^{-1} \gamma} d W  \text{.}
\end{array}
\end{cases}
\end{equation}
The resulting process is Gaussian with stationary distribution given by the BG distribution:
\[
P_{\infty}(q,p) = Z^{-1} \exp\left( - \beta \left( \frac{ p^2}{2} + \frac{q^2}{2} \right) \right)
\]
and with
\[
\mu(q^2) = \lim_{t \to \infty} \E \{ q_t^2 \} = 1/\beta, ~~~ \mu(p^2) = \lim_{t \to \infty} \E \{ p_t^2 \} = 
1/\beta,~~~ \kappa(q p) = \lim_{t \to \infty} \E\{ q_t p_t \} = 0 \text{.}
\]

The stationary distribution of the geometric Langevin integrators (\ref{eq:bgse})-(\ref{eq:bgne}) 
is also Gaussian with equilibrium distribution of the form:
\[
P_h (q, p) = \frac{1}{2 \pi | \Sigma^{-1} | } \exp\left( -\frac{1}{2} \begin{pmatrix} q  &  p 
\end{pmatrix} \Sigma^{-1} \begin{pmatrix} q \\ p \end{pmatrix} \right)
\]
where 
\[
\Sigma = \begin{bmatrix} \sigma_q^2 & \kappa \\ \kappa & \sigma_p^2 \end{bmatrix}, ~~~ 
\sigma_q^2 = \lim_{n \to \infty} \E \{ q_n^2 \} , ~~~ \sigma_p^2 = \lim_{n \to \infty} \E \{ p_n^2 \} , 
~~~ \kappa = \lim_{n \to \infty} \E \{ q_n p_n \} \text{.}
\]
This stationary correlation matrix can be explicitly determined.   
For (\ref{eq:bgse}) its entries are given by:
\begin{align*}
\sigma_q^2 &=  \frac{ \left( 1+ e^{\gamma h} \right)^2 }{ \left(2 + 2 e^{\gamma h} - h^2 \right) \beta} = \frac{1}
{\beta} + \mathcal{O}(h) \\
\sigma_p^2 &= \frac{2 + 2 e^{\gamma h} - h^2 + e^{2 \gamma h} h^2}{\left( 2 + 2 e^{\gamma h} - h^2\right) \beta} = 
\frac{1}{\beta} + \mathcal{O}(h^2) \\
\gamma &=   - \frac{e^{\gamma h} \left(1 + e^{\gamma h} \right) h}{\left( 2 + 2 e^{\gamma h} - h^2\right) \beta}  = 
\mathcal{O}(h)
\end{align*}
Observe that the cumulative error (\ref{eq:bgse}) makes is of $\mathcal{O}(h)$, i.e.,
\[
| \sigma_q^2 - \mu(q^2 ) | + 
| \sigma_p^2 - \mu(p)^2 ) |  +
| \kappa - \mu(q p ) |  \le   \mathcal{O}(h) \text{.}
\]
Whereas for (\ref{eq:bgsv}) its entries are given by:
\begin{align*}
\sigma_q^2 &=  \frac{4}{\beta ( 4 - h^2  )} = \frac{1}{\beta} +  \frac{h^2}{4 \beta} + \mathcal{O}
(h^4) \\
\sigma_p^2 &= \frac{1}{\beta}  \\
\kappa &=   0
\end{align*}
and its cumulative error is of $\mathcal{O}(h^2)$, i.e., 
\[
| \sigma_q^2 - \mu(q^2 ) | + 
| \sigma_p^2 - \mu(p)^2 ) |  +
| \kappa - \mu(q p ) |  \le   \mathcal{O}(h^2) \text{.}
\]
For (\ref{eq:bgne}) its entries are given by:
\begin{align*}
\sigma_q^2 &=  \frac{1}{\beta} + \frac{\left(-4-3 \times \sqrt[3]{2}-2 \times 2^{2/3}\right)
   h^4}{144 \beta } + O(h^5) \\
\sigma_p^2 &= \frac{1}{\beta}  \\
\kappa &=   0
\end{align*}
and its cumulative error is of $\mathcal{O}(h^4)$, i.e., 
\[
| \sigma_q^2 - \mu(q^2 ) | + 
| \sigma_p^2 - \mu(p)^2 ) |  +
| \kappa - \mu(q p ) |   \le   \mathcal{O}(h^4) \text{.}
\]

Finally, consider the exact splitting applied to the linear oscillator at uniform temperature.  
Hamilton's equations for a linear  oscillator are:
\[
\begin{bmatrix} \dot{q} \\ \dot{p} \end{bmatrix}(t) = \begin{bmatrix} 0 & 1 \\ -1 & 0 \end{bmatrix} 
\begin{bmatrix} q \\ p \end{bmatrix}(t), ~~~  \begin{bmatrix} q \\ p \end{bmatrix}(0) =  
\begin{bmatrix} q_0 \\ p_0 \end{bmatrix} \text{,}
\]
with explicit solution given by:
\[
\begin{bmatrix} q \\ p \end{bmatrix}(t) =   \begin{bmatrix} \cos(t) & \sin(t) \\ -\sin(t) & \cos(t) 
\end{bmatrix}  \begin{bmatrix} q_0 \\ p_0 \end{bmatrix} \text{.}
\]
Thus, the exact splitting update is given by:
\[
\begin{bmatrix} q_1 \\ p_1 \end{bmatrix} =   \begin{bmatrix} \cos(h) & \sin(h) \\ -\sin(h) & \cos(h) 
\end{bmatrix} \begin{bmatrix} q_0 \\ p_0 \end{bmatrix} + 
 \sqrt{\frac{e^{2 \gamma h} - 1}{\beta}} \begin{bmatrix} \sin(h)  \\ \cos(h) \end{bmatrix}  \xi_0  \text{.}
\]
In this situation one can show there is no error made in the stationary correlation matrix.  This
follows from the fact that the exact solution of Hamilton's equations is volume and energy 
preserving.

\paragraph{Nonglobally Lipschitz, Nonlinear Oscillator}

The theory in this paper does not apply to this example since the potential force is nonglobally
Lipschitz.  With a nonglobally Lipschitz potential force, for any $h>0$ there will exist regions in phase space 
where the Lipschitz constant of the potential force is beyond the linear stability threshold of an
explicit variational integrator $\theta_h$.  Hence, a GLA based on an explicit variational integrator will be 
stochastically unstable; transient, to be precise.  However, for the 
step-sizes and variational integrators employed, and for the duration of the numerical experiments, 
discrete orbits of GLA seem to be confined to a compact region of phase space where the 
variational integrator $\theta_h$ is linearly stable and Monte Carlo estimates are consistent 
with the error estimates in the paper.

The governing equations for a cubic oscillator of unit mass at uniform
temperature $1/\beta$ are given explicitly by evaluating
\eqref{InertialLangevin} at $U(q) = q^4/4 - q^2/2$:
\begin{equation}
\begin{cases}
\begin{array}{rcl}
d q &=& p dt  \text{,}  \\
d p &= & (q - q^3) dt - \gamma p dt + \sqrt{2 \beta^{-1} \gamma} d W  \text{.}
\end{array}
\end{cases}
\end{equation}
The resulting potential force is only locally Lipschitz.

The estimates shown earlier predict that
\[
| \mu(q^2) - \mu_h(q^2) |  \le  \mathcal{O}(h^p)
\]
where $p$ is the order of accuracy of $\theta_h$.    Hence, one expects near fourth-order 
accuracy for \eqref{eq:bgne},
near second-order accuracy for \eqref{eq:bgsv} and first-order accuracy for \eqref{eq:bgse} as 
shown in 
table~\ref{tab:coerrorstats}.  The tests will apply (\ref{eq:bgse})-(\ref{eq:bgne}) to estimate  
\[
\lim_{t \to \infty} \E \{ q_t^2 \} = \mu(q^2) =  \frac{\int_{-\infty}^{\infty} q^2 e^{- \beta U(q)} dq}
{ \int_{-\infty}^{\infty} e^{- \beta U(q)} dq}
\]
by empirical averages of the form  
\[
I^{h, N} := \frac{1}{N} \left( \sum_{i=1}^N q_i^{2} \right) \text{.}
\]  
As nicely discussed in \cite{Ta2002}, in addition to the discretization error $| \mu(q^2) - 
\mu_h(q^2) | $ one has to cope with the statistical error arising from the 
time-average being finite, i.e., $I^{h, N} \approx \mu_h(q^2)$.  
The computations were performed with $\gamma = 1$ and an inverse temperature 
value of $ \beta = 2$.

\begin{table}
\begin{center}
\begin{tabular}{ccccc}
Time-Step & Number of Steps    &   \eqref{eq:bgse} & \eqref{eq:bgsv}  & \eqref{eq:bgne}  \\
 \hline 
 \hline
h  & N  &   3.11e-02  &      8.03e-03    &   1.45e-02  \\ 
 \hline
h/2  & 2 N &  1.49e-02     &   1.94e-03  & 9.80e-04  \\
 \hline
h/4 & 4 N &   7.42e-03     &   4.83e-04     & 7.35e-05 \\
 \hline
 h/8  & 8 N &  3.74e-03   &   1.29e-04      &  5.79e-06 \\
\end{tabular}
\end{center}
\caption{ 
The table estimates $\left| \mu_h(q^2) -   \mu(q^2)  \right|$ using empirical time-averages with 
$N=40 \times 10^9$ steps and $h=0.4$ with GLA as determined by (\ref{eq:bgse})-(\ref{eq:bgne}).   
For subsequent rows the time-steps are halved and the number of steps doubled, so that the time-interval 
of integration is fixed for all experiments. The results show that as the time-steps are halved the difference 
decreases linearly for \eqref{eq:bgse}, nearly quadratically for \eqref{eq:bgsv}, and  nearly quartically 
for \eqref{eq:bgne}.  These results are consistent with the error estimates in the paper.  }
\label{tab:coerrorstats}
\end{table}


\section{Conclusion}

The analysis in this paper represents a first step towards a deeper analysis of GLA for 
molecular systems.    In this paper we make assumptions on the Hamiltonian that ensure the 
solution to inertial Langevin and GLA are geometrically ergodic.  In particular, we assume the 
Hamiltonian vector field is uniformly Lipschitz and the Hamiltonian is coercive.  These 
hypotheses are sufficient to ensure GLA is geometrically ergodic whenever the solution process
is. In particular, the former hypothesis is important to ensure GLA is stochastically stable  
\cite{MeTw2009}.   If GLA's underlying variational integrator is not globally linearly 
stable, one can show GLA defines a transient Markov chain.    Still one can use GLA 
as proposal step within a Metropolis-Hastings algorithm to obtain a stochastically stable 
Metropolis-Adjusted Geometric Langevin Algorithm  (MAGLA).  A numerical analysis of 
MAGLA including pathwise convergence can be found in \cite{BoVa2009A}.

A closer inspection of the proof of Theorem~\ref{GLABGaccuracy} reveals that the estimate
relies on the following important ingredients:
\begin{enumerate}
\item GLA is geometrically ergodic with respect to a probability measure $\mu_h$;
\item the variational integrator is Lebesgue-measure preserving;
\item the Ornstein-Uhlenbeck flow preserves $\mu$; and,
\item the local energy error of the variational integrator is $(p+1)th$-order accurate.
\end{enumerate} 
Therefore, we stress that the result holds under more general conditions.  The main point being:
\begin{quote}
{\em
If GLA is geometrically ergodic with respect to a unique invariant measure, the error in sampling the 
invariant measure of the SDE is determined by the energy error in GLA's variational integrator.}
\end{quote}


\section{Appendix}

\subsection{Single-Step Error}

\begin{lemma} \label{singlesteperror}
Assume \ref{sa1} and \ref{sa2}.
For $h$ small enough, there exists a $C>0$ such that 
\begin{equation} 
| \E^{\boldsymbol{x}} \{   \boldsymbol{Y}(h)  - \boldsymbol{Z}_{1}  \} | 
\le C \left( 1+ | \boldsymbol{x} |^2 \right)^{1/2}  h^{2} 
\end{equation}
and
\begin{equation} 
\left( \E^{\boldsymbol{x}} \{ | \boldsymbol{Y}(h)  - \boldsymbol{Z}_{1} |^2 \} \right)^{1/2} 
\le C \left( 1+ | \boldsymbol{x} |^2 \right)^{1/2}  h^{3/2}  \text{.}
\end{equation}
\end{lemma}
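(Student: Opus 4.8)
The plan is to couple $\boldsymbol{Y}$ and the exact splitting $\boldsymbol{Z}_1 = \vartheta_h \circ \psi_{h,0}(\boldsymbol{x})$ to the same Brownian path on $[0,h]$, write both in integral form, and subtract. Decompose the drift of \eqref{InertialLangevin} as $b = b_H + b_{OU}$, where $b_H(\boldsymbol{y}) = \mathbb{J}\nabla H(\boldsymbol{y})$ and $b_{OU}(\boldsymbol{y}) = -\gamma \boldsymbol{C}\nabla H(\boldsymbol{y})$, and note that the diffusion coefficient $\sigma := \sqrt{2\gamma\beta^{-1}}\,\boldsymbol{C}$ is constant. Using $\boldsymbol{Y}(h) = \boldsymbol{x} + \int_0^h b(\boldsymbol{Y}(s))\,ds + \sigma\boldsymbol{W}(h)$, $\psi_{h,0}(\boldsymbol{x}) = \boldsymbol{x} + \int_0^h b_{OU}(\psi_{s,0}(\boldsymbol{x}))\,ds + \sigma\boldsymbol{W}(h)$, and $\vartheta_h(\boldsymbol{z}) = \boldsymbol{z} + \int_0^h b_H(\vartheta_s(\boldsymbol{z}))\,ds$, subtraction together with $b(\boldsymbol{x}) = b_H(\boldsymbol{x}) + b_{OU}(\boldsymbol{x})$ gives the identity
\[
\boldsymbol{Y}(h) - \boldsymbol{Z}_1 = \int_0^h \big( b(\boldsymbol{Y}(s)) - b(\boldsymbol{x}) \big)\,ds - \int_0^h \big( b_{OU}(\psi_{s,0}(\boldsymbol{x})) - b_{OU}(\boldsymbol{x}) \big)\,ds - \int_0^h \big( b_H(\vartheta_s(\psi_{h,0}(\boldsymbol{x}))) - b_H(\boldsymbol{x}) \big)\,ds ,
\]
in which both the $\sigma\boldsymbol{W}(h)$ terms and the $h\,b(\boldsymbol{x})$ terms have cancelled (the latter cancellation being precisely the consistency of the Lie-Trotter splitting). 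Only {\bf U1} is used, since $\boldsymbol{Z}_1$ is built from the \emph{exact} Hamiltonian flow $\vartheta_h$.

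Before estimating, I would record two routine preliminary bounds valid for $h<h_c$ and $s\in[0,h]$. Since {\bf U1} forces $\nabla U$ — hence $b$, $b_H$, $b_{OU}$ — to have at most linear growth, while $H$ is quadratic in momentum, a Gronwall estimate together with \eqref{Ymomentbound} and the explicit Gaussian form of $\psi$ give $\E^{\boldsymbol{x}}\{|\boldsymbol{Y}(s)|^2\}$, $\E^{\boldsymbol{x}}\{|\psi_{s,0}(\boldsymbol{x})|^2\}$ and $\E^{\boldsymbol{x}}\{|\vartheta_s(\psi_{h,0}(\boldsymbol{x}))|^2\}$ all $\le C(1+|\boldsymbol{x}|^2)$; and, splitting each displacement into its (Lipschitz) drift integral plus the additive noise $\sigma\boldsymbol{W}$, one gets $(\E^{\boldsymbol{x}}\{|\boldsymbol{Y}(s)-\boldsymbol{x}|^2\})^{1/2}$, $(\E^{\boldsymbol{x}}\{|\psi_{s,0}(\boldsymbol{x})-\boldsymbol{x}|^2\})^{1/2}$ and $(\E^{\boldsymbol{x}}\{|\vartheta_s(\psi_{h,0}(\boldsymbol{x}))-\boldsymbol{x}|^2\})^{1/2}$ all $\le C(1+|\boldsymbol{x}|^2)^{1/2} s^{1/2}$.

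The mean-square bound is then immediate: $b$, $b_H$, $b_{OU}$ are globally Lipschitz by {\bf U1} (the momentum dependence being linear), so each of the three integrands in the identity has $L^2$-norm (under $\E^{\boldsymbol{x}}$) at most $C(1+|\boldsymbol{x}|^2)^{1/2} s^{1/2}$, and Minkowski's integral inequality gives $(\E^{\boldsymbol{x}}\{|\boldsymbol{Y}(h) - \boldsymbol{Z}_1|^2\})^{1/2} \le C(1+|\boldsymbol{x}|^2)^{1/2}\int_0^h s^{1/2}\,ds$, which is the asserted $O(h^{3/2})$ estimate.

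The delicate point — and the reason one gains the extra half power for the \emph{mean} — is that a naive stochastic Taylor expansion would seem to need $\nabla^3 U$ bounded, which {\bf U1} does not provide; instead I would exploit the degenerate structure of \eqref{InertialLangevin}. The only component of any of the displacements $\boldsymbol{Y}(s)-\boldsymbol{x}$, $\psi_{s,0}(\boldsymbol{x})-\boldsymbol{x}$, $\vartheta_s(\psi_{h,0}(\boldsymbol{x}))-\boldsymbol{x}$ that is $O(s^{1/2})$ rather than $O(s)$ is the \emph{momentum} increment, and momentum enters $b$, $b_H$, $b_{OU}$ only through the linear maps $\boldsymbol{p}\mapsto\boldsymbol{M}^{-1}\boldsymbol{p}$ and $\boldsymbol{p}\mapsto\gamma\boldsymbol{M}^{-1}\boldsymbol{p}$; the nonlinear part $\nabla U$ sees only the configuration, whose increment is an integral of momentum over a time interval of length $\le h$ and is therefore $O(h)$ pathwise (with no $\sqrt{h}$ term). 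Concretely, for the linear-in-momentum pieces $\E^{\boldsymbol{x}}$ annihilates the Brownian increment and leaves a term of size $O\big((1+|\boldsymbol{x}|^2)^{1/2}\,h\big)$ by the moment bounds, while for the $\nabla U$ piece, writing $\boldsymbol{q}(s)$ for the relevant configuration variable, $\big| \E^{\boldsymbol{x}}\{\nabla U(\boldsymbol{q}(s))-\nabla U(\boldsymbol{q})\} \big| \le A_0\,\E^{\boldsymbol{x}}\{|\boldsymbol{q}(s)-\boldsymbol{q}|\} \le C(1+|\boldsymbol{x}|^2)^{1/2}\,h$. Hence each integrand in the identity has mean of size $O\big((1+|\boldsymbol{x}|^2)^{1/2}\,h\big)$ uniformly for $s\in[0,h]$, and integrating over $[0,h]$ produces $\big| \E^{\boldsymbol{x}}\{\boldsymbol{Y}(h)-\boldsymbol{Z}_1\} \big| \le C(1+|\boldsymbol{x}|^2)^{1/2}\,h^2$. (Equivalently one may argue via generators: for a coordinate function $f$, both $\E^{\boldsymbol{x}}\{f(\boldsymbol{Y}(h))\}$ and $\E^{\boldsymbol{x}}\{f(\boldsymbol{Z}_1)\} = (e^{h\mathcal{L}_{OU}}e^{h\mathcal{L}_H}f)(\boldsymbol{x})$ equal $f(\boldsymbol{x}) + h\mathcal{L}f(\boldsymbol{x}) + O\big((1+|\boldsymbol{x}|^2)^{1/2}h^2\big)$ with $\mathcal{L}=\mathcal{L}_H+\mathcal{L}_{OU}$, the second-order remainders involving $U$ only through $\nabla^2 U$, which {\bf U1} bounds.) This structural observation is the main obstacle; everything else is bookkeeping.
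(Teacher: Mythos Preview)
Your argument is correct and takes a somewhat different route from the paper's. The paper expands $(\boldsymbol{Q}(h),\boldsymbol{P}(h))$ and $(\hat{\boldsymbol{Q}}(h),\hat{\boldsymbol{P}}(h))$ separately to second order via explicit integration by parts on \eqref{langevin} and \eqref{hamiltonseqns}, then subtracts componentwise and reads off the orders from the resulting formulas \eqref{Qerror}--\eqref{Perror}. You instead write a single splitting identity $\boldsymbol{Y}(h)-\boldsymbol{Z}_1 = \int_0^h(b(\boldsymbol{Y}(s))-b(\boldsymbol{x}))\,ds - \int_0^h(b_{OU}(\psi_{s,0}(\boldsymbol{x}))-b_{OU}(\boldsymbol{x}))\,ds - \int_0^h(b_H(\vartheta_s(\psi_{h,0}(\boldsymbol{x})))-b_H(\boldsymbol{x}))\,ds$ and then argue structurally: the $O(s^{1/2})$ part of each displacement lives only in momentum, the drifts are linear in momentum, and the nonlinearity $\nabla U$ sees only configuration, whose increment is $O(h)$. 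Your approach makes the mechanism (degenerate additive noise plus Lipschitz nonlinearity acting only on the smooth component) more transparent and avoids writing out Hessian terms explicitly; the paper's approach is more hands-on but yields concrete error expressions. One small slip: your claimed bound $(\E^{\boldsymbol{x}}\{|\vartheta_s(\psi_{h,0}(\boldsymbol{x}))-\boldsymbol{x}|^2\})^{1/2}\le C(1+|\boldsymbol{x}|^2)^{1/2}s^{1/2}$ fails at $s=0$, since $\psi_{h,0}(\boldsymbol{x})-\boldsymbol{x}$ is already $O(h^{1/2})$; the correct uniform bound on $[0,h]$ is $O(h^{1/2})$, which still integrates to $O(h^{3/2})$, so the conclusion is unaffected.
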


\begin{proof}
Write $\mathbf{Y}(t)=(\boldsymbol{Q}(t),\boldsymbol{P}(t))$ where $\boldsymbol{Q}(t)$ 
and  $\boldsymbol{P}(t)$ represent the instantaneous configuration and 
momentum of the system, respectively.  In terms of which write the 
SDE \eqref{InertialLangevin} as:
\begin{equation} \label{langevin}
\begin{cases}
d \boldsymbol{Q} / dt &= \boldsymbol{M}^{-1} \boldsymbol{P}   \\
d \boldsymbol{P} &= - \nabla U( \boldsymbol{Q} ) dt  
	- \gamma  \boldsymbol{M}^{-1} \boldsymbol{P} dt + \sqrt{2 \gamma \beta^{-1}} d \boldsymbol{W}
	\end{cases}
\end{equation}
$\boldsymbol{Q}(0) = \boldsymbol{Q}_0$ and $\boldsymbol{P}(0) = \boldsymbol{P}_0$.
It will be useful to write out the solution of  \eqref{langevin}.
For this purpose integrate \eqref{langevin} to obtain: \begin{align} 
& \boldsymbol{Q}(h) = \boldsymbol{Q}_0
	+ h \boldsymbol{M}^{-1} \boldsymbol{P}_0
	+ \int_0^h \boldsymbol{M}^{-1} [ - \nabla U(\boldsymbol{Q}(s)) - \gamma \boldsymbol{M}^{-1} \boldsymbol{P}(s) ] ( h - s) ds \nonumber \\
& \qquad + \sqrt{2 \gamma \beta^{-1}}  \int_{0}^{h} (h - s) \boldsymbol{M}^{-1} 
	d \boldsymbol{W}(s)  \label{Qcontinuous}
\end{align}
and
\begin{align} 
& \boldsymbol{P}(h) = e^{-\gamma \boldsymbol{M}^{-1} h} 
	\boldsymbol{P}_0 - h  \nabla U( \boldsymbol{Q}_0)  - \int_{0}^{h} (h-s) \frac{\partial^2 U}{\partial \boldsymbol{q}^2}(\boldsymbol{Q}(s)) \cdot \boldsymbol{M}^{-1} \boldsymbol{P}(s) ds \nonumber \\
& \qquad + \int_0^h (\boldsymbol{I} -  e^{- \gamma \boldsymbol{M}^{-1} (h-s)} ) \nabla U(\boldsymbol{Q}(s)) ds + \boldsymbol{\eta} \label{Pcontinuous}
\end{align}
where we have introduced: \[
\boldsymbol{\eta} = \sqrt{2 \gamma \beta^{-1}} \int_0^h e^{-\gamma \boldsymbol{M}^{-1} (h - s) } d \boldsymbol{W}(s) \text{.}
\]

Write $\mathbf{Z}(t)=(\hat{\boldsymbol{Q}}(t),\hat{\boldsymbol{P}}(t))$ where $\hat{\boldsymbol{Q}}(t)$ 
and  $\hat{\boldsymbol{P}}(t)$ represent the instantaneous configuration and 
momentum of the exact splitting, respectively.  
The exact splitting after a single step solves \begin{equation} \label{hamiltonseqns}
\begin{cases}
d \hat{\boldsymbol{Q}}/dt &= \boldsymbol{M}^{-1} \hat{\boldsymbol{P}} \\
d \hat{\boldsymbol{P}}/dt &= - \nabla U(\hat{\boldsymbol{Q}}) 
\end{cases}
\end{equation}
where $\hat{\boldsymbol{Q}}(0) = \boldsymbol{Q}_0$ and $\hat{\boldsymbol{P}}(0) = e^{-\gamma \boldsymbol{M}^{-1} h} \boldsymbol{P}_0 + \boldsymbol{\eta}$.  Integrating \eqref{hamiltonseqns} 
yields,
 \begin{align} 
& \hat{\boldsymbol{Q}}(h) = \boldsymbol{Q}_0
	+ h \boldsymbol{M}^{-1}  e^{-\gamma \boldsymbol{M}^{-1} h} \boldsymbol{P}_0
	- \int_0^h \boldsymbol{M}^{-1}  \nabla U(\hat{\boldsymbol{Q}}(s)) ( h - s) ds  + h \boldsymbol{M}^{-1} \boldsymbol{\eta}  \label{Qdiscrete}
\end{align}
and
\begin{align} 
& \hat{\boldsymbol{P}}(h) = e^{-\gamma \boldsymbol{M}^{-1} h} 
	\boldsymbol{P}_0 - h  \nabla U( \boldsymbol{Q}_0)  \nonumber \\
& \qquad  - \int_{0}^{h} (h-s) \frac{\partial^2 U}{\partial \boldsymbol{q}^2}(\hat{\boldsymbol{Q}}(s)) \cdot \boldsymbol{M}^{-1} \hat{\boldsymbol{P}}(s) ds 
	+ \boldsymbol{\eta} \label{Pdiscrete} \text{.}
\end{align}

To obtain the mean-squared and mean error
estimates we will use the following bounds on the second moment of the continuous solution and the
exact splitting. Namely, for all $t \in [0,h]$, there exists a $C>0$ such that 
\begin{equation} \label{momentbounds}
\E^{\boldsymbol{x}} \left\{ |  \boldsymbol{Z}(t)  |^2 \right\} 
\vee
\E^{\boldsymbol{x}} \left\{ |  \boldsymbol{Y}(t)  |^2 \right\} \le C (1 + | \boldsymbol{x} |^2) 
\end{equation}
where $\boldsymbol{x} = (\boldsymbol{Q}_0, \boldsymbol{P}_0)$.
 We will prove this estimate for the exact splitting, and omit the proof 
for the continuous solution since it is very similar.  Let $\hat{\boldsymbol{x}} = (\hat{\boldsymbol{Q}}(0), \hat{\boldsymbol{P}}(0))$. By Taylor's formula, 
\begin{align*}
 & |  \boldsymbol{Z}(t)  |^2 =  | \hat{\boldsymbol{x}} |^2 + 2 \int_0^t \left\langle \hat{\boldsymbol{Q}}(s), \boldsymbol{M}^{-1} \hat{\boldsymbol{P}}(s) \right\rangle ds +  2 \int_0^t \left\langle \hat{\boldsymbol{P}}(s), - \nabla U( \hat{\boldsymbol{Q}}(s) ) \right\rangle ds 
\end{align*}
By Young's inequality,
\begin{align*}
 & |  \boldsymbol{Z}(t)  |^2 \le  | \hat{\boldsymbol{x}} |^2  \\
& \qquad +  \int_0^t ( | \hat{\boldsymbol{Q}}(s) | ^2 + | \boldsymbol{M}^{-1} \hat{\boldsymbol{P}}(s) |^2 ) ds 
+   \int_0^t ( | \hat{\boldsymbol{P}}(s) | ^2  + | \nabla U( \hat{\boldsymbol{Q}}(s) )|^2 ) ds 
\end{align*}
The uniform Lipschitz condition {\bf U1} implies a linear growth condition on the potential force.   Hence,
there exists a constant $C>0$ such that
\begin{align*}
  |  \boldsymbol{Z}(t)  |^2 \le   | \hat{\boldsymbol{x}} |^2
  +  C \int_0^t  | \hat{\boldsymbol{Z}}(s) | ^2 ds 
\end{align*}
By Gronwall's lemma it follows that,
\begin{align*}
  |  \boldsymbol{Z}(t)  |^2 \le   | \hat{\boldsymbol{x}} |^2  e^{C h} 
\end{align*}
for $t \le h$.   Hence, for $h$ small enough we obtain the desired bound on the 
second moment of the exact splitting.

The difference between \eqref{Qcontinuous} and \eqref{Qdiscrete} is,
\begin{align}
& \boldsymbol{Q}(h) - \hat{\boldsymbol{Q}}(h) = \nonumber \\ 
& \qquad h \boldsymbol{M}^{-1} ( \boldsymbol{I} - e^{-\gamma \boldsymbol{M}^{-1} h } ) \boldsymbol{P}_0  \nonumber \\ 
& \qquad + \int_0^h \boldsymbol{M}^{-1} [ \nabla U( \hat{\boldsymbol{Q}}(s) )  - \nabla U( \boldsymbol{Q}(s) ) ] (h - s) ds \nonumber \\
& \qquad + \int_0^h \boldsymbol{M}^{-1} [ - \gamma \boldsymbol{P}(s) ds + \sqrt{2 \gamma \beta^{-1}} d \boldsymbol{W}(s) ] ( h - s) - h \boldsymbol{M}^{-1} \boldsymbol{\eta} \label{Qerror}
\end{align}
Likewise, the difference between \eqref{Pcontinuous} and \eqref{Pdiscrete} is,
\begin{align}
& \boldsymbol{P}(h) - \hat{\boldsymbol{P}}(h) = \nonumber \\
& \qquad \int_0^h  (h-s) \left[ \frac{\partial^2 U}{\partial \boldsymbol{q}^2}(\hat{\boldsymbol{Q}}(s)) \cdot \boldsymbol{M}^{-1} \hat{\boldsymbol{P}}(s) - \frac{\partial^2 U}{\partial \boldsymbol{q}^2}(\boldsymbol{Q}(s)) \cdot \boldsymbol{M}^{-1} \boldsymbol{P}(s) \right]  ds \nonumber \\
& \qquad + \int_0^h (e^{-\gamma \boldsymbol{M}^{-1} (h - s) } - \boldsymbol{I} ) \nabla U( \boldsymbol{Q}(s) ) ds
\label{Perror}
\end{align}
From \eqref{Qerror} and \eqref{Perror}, it is clear that
the leading term of the expectation of these differences is $O(h^2)$ and the leading term in the mean-squared expectation of the
differences is $O(h^{3/2})$.  To bound these terms one needs the bounds on the second
moments of the solutions and the exact splitting provided in \eqref{momentbounds}.  
To enable estimation of \eqref{Perror} one needs control of the Hessian of $U$.
The assumption of smoothness on $U$ and the uniform Lipschitz condition {\bf U1} on the potential force
provide this control.  In particular, since a differentiable function is Lipschitz continuous if and only if it has bounded differential, 
the Frobenius norm of the Hessian of $U$ is bounded by the Lipschitz constant of the potential force.

\end{proof}

\subsection{Variational Integrators}

Let $L: \mathbb{R}^{2n} \to \mathbb{R}$ denote the Lagrangian obtained from the 
Legendre transform of the Hamiltonian $H$, and given by:
\[
L(\boldsymbol{q}, \boldsymbol{v}) = 
\frac{1}{2} \boldsymbol{v}^T \boldsymbol{M}  \boldsymbol{v} - U(\boldsymbol{q}) \text{.}
\]
A variational integrator is defined by a discrete Lagrangian $L_d : \mathbb{R}^n \times \mathbb{R}^n
\times \mathbb{R}^+ \to \mathbb{R}$ which is an approximation to the so-called {\em exact discrete 
Lagrangian} which is defined as:
\[
L_d^E(\boldsymbol{q}_0, \boldsymbol{q}_1, h) = \int_0^h L( \boldsymbol{Q}, \dot{\boldsymbol{Q}} ) dt
\]
where $\boldsymbol{Q}(t)$ solves the Euler-Lagrange equations for the Lagrangian $L$ 
with endpoint  conditions $\boldsymbol{Q}(0) = \boldsymbol{q}_0$ and 
$\boldsymbol{Q}(h) = \boldsymbol{q}_1$.

By passing to the Hamiltonian description, a discrete Lagrangian determines 
a symplectic integrator on $\mathbb{R}^{2n}$ as follows.  
Given $(\boldsymbol{q}_0,\boldsymbol{p}_0) \in \mathbb{R}^{2n}$, 
a variational integrator defines an update 
$(\boldsymbol{q}_1, \boldsymbol{p}_1) \in \mathbb{R}^{2n}$  
by the following system of equations:
\begin{equation} \label{del}
\begin{cases}
\begin{array}{rcl}
\boldsymbol{p}_0 &=& -D_1 L_d(\boldsymbol{q}_0, \boldsymbol{q}_1,h) \text{,} \\
\boldsymbol{p}_1 &=& D_2 L_d(\boldsymbol{q}_0, \boldsymbol{q}_1,h)  \text{.}
\end{array}
\end{cases}
\end{equation}
Denote this map by $\theta_h : \mathbb{R}^{2n} \to  \mathbb{R}^{2n}$, i.e., 
\[
\theta_h: ~~ (\boldsymbol{q}_0,\boldsymbol{p}_0) \mapsto (\boldsymbol{q}_1, \boldsymbol{p}_1)  \text{,}
\] 
where $(\boldsymbol{q}_1, \boldsymbol{p}_1)$ solve \eqref{del}.
One can show that $\theta_h$ preserves the canonical symplectic form 
on $\mathbb{R}^{2n}$, and hence, is Lebesgue measure preserving \cite{MaWe2001}.
By appropriately constructing $L_d$, the map $\theta_h$ can define an approximation 
to the flow of Hamilton's equations for the Hamiltonian $H$  \eqref{HamiltonsEquations}.   
Hyperregularity of the discrete Lagrangian means for all $h>0$
\begin{equation} \label{HyperRegularity}
| \det D_{12} L_d(\boldsymbol{q}_0, \boldsymbol{q}_1, h) | > 0, 
~~ \forall~ \boldsymbol{q}_0, \boldsymbol{q}_1 \in Q \times Q \text{.}
\end{equation}

\bibliography{nawaf}{}
\bibliographystyle{amsplain}

 \end{document}